\documentclass{amsart}
\usepackage[utf8]{inputenc}

\usepackage{amsthm, amsmath, amsfonts, amssymb}
\usepackage{mathtools}

\usepackage[shortlabels]{enumitem}
\usepackage[capitalise]{cleveref}
\usepackage[margin=1.5in]{geometry}

\newtheorem{theorem}{Theorem}[section]
\newtheorem{proposition}[theorem]{Proposition}
\newtheorem{lemma}[theorem]{Lemma}
\newtheorem{corollary}[theorem]{Corollary}
\newtheorem{conjecture}[theorem]{Conjecture}

\theoremstyle{definition}
\newtheorem{definition}[theorem]{Definition}
\newtheorem{remark}[theorem]{Remark}

\newcommand{\EE}{\mathbb{E}}
\newcommand{\RR}{\mathbb{R}}

\title{Equiangular lines and large multiplicity of fixed second eigenvalue}
\author{Carl Schildkraut}
\address{ Department of Mathematics, Massachusetts Institute of Technology, Cambridge, MA 02139, USA}
\email{carlsc@mit.edu}

\begin{document}

\maketitle

\begin{abstract}
Answering a question of Jiang and Polyanskii as well as Jiang, Tidor, Yao, Zhang, and Zhao, we show the existence of infinitely many angles $\theta$ for which the maximum number of lines in $\RR^n$ meeting at the origin with pairwise angles $\theta$ exceeds $n+\Omega(\log\log n)$ but is at most $n+o(n)$. To accomplish this, we construct, for various real $\lambda$ and integer $d$, $d$-regular graphs with second eigenvalue exactly $\lambda$ and arbitrarily large second eigenvalue multiplicity. Central to our construction is a distribution on factors of bipartite graphs which possesses concentration properties.
\end{abstract}

\section{Introduction}\label{sec:intro}

A set of lines in $\RR^n$ passing through the origin is called \emph{equiangular} if each pair meets at the same angle $\theta$. The question of the maximum number of equiangular lines in given dimension is natural and well-studied. A short linear-algebraic argument due to Gerzon (see \cite{LemmensSeidel}) gives an upper bound in dimension $n$ of $\binom{n+1}2$, and a lower bound on the order of $n^2$ holds due to a construction of de Caen \cite{deCaen}. 

\vspace{2mm}

One may also consider the setting where the common angle is fixed. For an $\alpha\in(0,1)$, let $N_\alpha(n)$ denote the maximum number of lines in $\RR^n$ each pair of which meets at an angle $\arccos\alpha$. One can see that $N_\alpha(n)\geq n$ for each $\alpha$ and $n$ by constructing the lines spanned by vectors generating the Gram matrix with ones on the diagonal and $\alpha$ off-diagonal. On the other hand, $N_\alpha(n)\leq n+2$ unless $(1-\alpha)/(2\alpha)$ is a totally real algebraic integer which exceeds all of its Galois conjugates \cite[Proposition 23]{JiangPolyanskii}. 

\vspace{2mm}

Much of the literature on $N_\alpha(n)$ focuses on the limit $\lim_{n\to\infty}N_\alpha(n)/n$. Due to a recent result of Jiang, Tidor, Yao, Zhang, and Zhao \cite{JTYZZ} confirming a conjecture of Jiang and Polyanskii \cite[Conjecture A]{JiangPolyanskii}, this limit has (essentially) been completely determined. We begin by stating their result, for which one definition is necessary. Whenever we speak of the eigenvalues of a graph $G$, we refer to the eigenvalues of its adjacency matrix $A_G$. 

\begin{definition}\label{def:spec-radius-order} For a positive real $\lambda$, the \emph{spectral radius order} $k(\lambda)$ of $\lambda$ is the minimum number of vertices of a graph with largest eigenvalue exactly $\lambda$, or $\infty$ if no such graph exists.
\end{definition}

\begin{theorem}[{\cite[Theorem 
 1.2]{JTYZZ}}]\label{thm:JTYZZ} Fix $\alpha\in(0,1)$, and let $\lambda=(1-\alpha)/(2\alpha)$.
\begin{enumerate}[(a)]
    \item If $k=k(\lambda)$ is finite, then $N_\alpha(n)=\lfloor k(n-1)/(k-1)\rfloor$ for all sufficiently large $n$.
    \item If $k(\lambda)=\infty$, then $N_\alpha(n)=n+o(n)$.
\end{enumerate}
\end{theorem}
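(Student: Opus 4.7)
The plan is to translate the equiangular-lines question into a question about eigenvalue multiplicities of graphs, following the Jiang--Polyanskii and Jiang--Tidor--Yao--Zhang--Zhao framework. Given $N$ equiangular lines in $\RR^n$, choose unit vectors $v_1,\dots,v_N$ along them and define a graph $G$ on $[N]$ by $ij\in E(G)\iff\langle v_i,v_j\rangle=-\alpha$. Setting $\lambda=(1-\alpha)/(2\alpha)$, the Gram matrix equals $M=2\alpha\bigl(\lambda I-A_G+\tfrac12 J\bigr)$, which is positive semidefinite of rank at most $n$. This forces the $\lambda$-eigenspace of $A_G$ restricted to $\mathbf{1}^\perp$ to have dimension at least $N-n-1$, and forces $x^\top A_Gx\le\lambda\|x\|^2$ for every $x\perp\mathbf{1}$.

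The central technical result is a structural lemma: under the above hypothesis with eigenspace dimension $m$, the graph $G$ contains many pairwise vertex-disjoint induced subgraphs, each having $\lambda$ as its largest eigenvalue. The natural first attempt is to pick an eigenvector $w$ in the eigenspace of minimal support $S$; then $G[S]$ is connected and $w|_S$ is a nowhere-vanishing $\lambda$-eigenvector of $A_{G[S]}$. However, since $w\perp\mathbf{1}$ forces $w|_S\perp\mathbf{1}_S$, and since $\lambda>0$ means the Perron vector of $G[S]$ is strictly positive, $w|_S$ cannot be proportional to the Perron vector, so $\lambda_1(G[S])>\lambda$ strictly. A finer argument is therefore required: rather than extracting a single eigenvector, one combines several eigenvectors of the eigenspace supported on a common region and extracts within it an induced subgraph with $\lambda_1=\lambda$. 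One then removes this subgraph and iterates, carefully tracking how much of the eigenspace dimension is used up at each step.

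The two parts of the theorem then follow by applying this structural analysis with $m=N-n-1$. In part (a), each extracted subgraph has $\ge k(\lambda)=k$ vertices, so $N\ge km\ge k(N-n-1)$, rearranging to $N\le k(n+1)/(k-1)$; an integrality analysis sharpens this to $\lfloor k(n-1)/(k-1)\rfloor$ for large $n$. The matching lower bound is produced by taking $G$ to be a disjoint union of copies of a fixed graph $H$ on $k$ vertices with $\lambda_1(H)=\lambda$ and directly realizing the associated Gram-matrix construction in $\RR^n$. In part (b), $k(\lambda)=\infty$ precludes the existence of any induced subgraph with $\lambda$ as largest eigenvalue, so a quantitative version of the structural argument---bounding the minimum size of a graph whose largest eigenvalue merely approaches $\lambda$---forces the eigenspace dimension to be $o(n)$, and hence $N_\alpha(n)=n+o(n)$.

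The main obstacle is the structural lemma itself. Its two subtle points are bridging the gap between ``$\lambda$ is an eigenvalue of $G[S]$'' and ``$\lambda$ is the largest eigenvalue of $G[S]$'', which requires leveraging the whole eigenspace rather than a single eigenvector, and making the induction go through after each vertex deletion while preserving both the eigenspace dimension and the upper-bound condition on the spectrum restricted to $\mathbf{1}^\perp$. The quantitative refinement needed for part (b) additionally requires a compactness-type input about the spectrum of graphs of bounded size, which is the deeper ingredient.
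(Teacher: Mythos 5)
This theorem is cited from \cite{JTYZZ} and is not proved in the present paper, so there is no internal proof to compare against; I can only assess your sketch against what is known about the actual argument. Your translation to graphs via the Gram matrix $M=(1-\alpha)I+\alpha J-2\alpha A_G$, the resulting bound $x^\intercal A_Gx\le\lambda\|x\|^2$ on $\mathbf 1^\perp$, and the eigenspace-dimension inequality are all correct, and your observation that the minimal-support eigenvector argument is blocked by Perron--Frobenius is an accurate diagnosis of a genuine subtlety in this line of work.

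However, your sketch omits what is arguably the load-bearing reduction: one must first pass (by a switching argument, i.e.\ replacing some $v_i$ by $-v_i$, together with a supersaturation/Ramsey-type input going back to Balla--Dr\"axler--Keevash--Sudakov and Jiang--Polyanskii) to a situation where the relevant graph has bounded degree $\Delta=\Delta(\alpha)$. Without bounded degree neither the structural decomposition in part (a) nor any multiplicity bound in part (b) can get off the ground, since one has no control over local structure. You should also reexamine your description of the ``deeper ingredient'' for part (b). It is not a compactness statement about spectra of small graphs; it is the quantitative theorem of \cite{JTYZZ} that a connected $n$-vertex graph of maximum degree $\Delta$ has second-eigenvalue multiplicity $O_\Delta(n/\log\log n)$, proved by a moment/closed-walk counting argument combined with a vertex-deletion (``delocalization'') step. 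The present paper explicitly points to this: the $o(n)$ in case (b) ``arises from the same bound on the multiplicity of the second largest eigenvalue of the adjacency matrix of a bounded degree graph.'' Your part (a) outline (extract disjoint induced subgraphs with top eigenvalue exactly $\lambda$, count, tighten to the floor expression; lower bound via disjoint copies of a $k$-vertex realizer of $\lambda$) is broadly in the right direction, but it also relies on the bounded-degree reduction you have not stated, and the inductive bookkeeping you gesture at (``carefully tracking how much of the eigenspace dimension is used up'') is where most of the actual work lives and cannot be waved away.
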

\noindent The $o(n)$ term in case (b) given by \cite{JTYZZ} is on the order of $n/\log\log n$, and arises from the same bound on the multiplicity of the second largest eigenvalue of the adjacency matrix of a bounded degree graph. This result inspired investigation into bounds on this multiplicity. McKenzie, Rasmussen, and Srivastava \cite{MRS} improved the upper bound to $n/\log^{1/5-o(1)}n$ in the case of regular graphs, and also provided sublinear upper bounds in the unbounded degree case. On the other side, Haiman, Schildkraut, Zhang, and Zhao \cite{HSZZ} gave a lower bound (i.e. a construction of bounded-degree graphs with large second eigenvalue multiplicity) of $n^{1/2-o(1)}$.

\vspace{2mm}

While \cref{thm:JTYZZ} determines the main term of $N_\alpha(n)$ as $n$ grows, it does not establish the order of secondary terms in the $k(\lambda)=\infty$ case. The following bound is conjectured in \cite{JiangPolyanskii} and \cite{JTYZZ}:

\begin{conjecture}[{\cite[Conjecture B]{JiangPolyanskii}, \cite[Conjecture 6.1]{JTYZZ}}]\label{conj:main} Fix $\alpha\in (0,1)$, and let $\lambda=(1-\alpha)/(2\alpha)$. If $k(\lambda)=\infty$, then $N_\alpha(n)=n+O_\alpha(1)$.
\end{conjecture}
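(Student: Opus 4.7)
The plan is to reduce \cref{conj:main} to a strong $O_\alpha(1)$ bound on the multiplicity of the second eigenvalue of a bounded-degree graph. Following the framework of \cite{JTYZZ}, any equiangular configuration of $N = N_\alpha(n)$ lines in $\RR^n$ yields, after the standard projection/normalization argument, the adjacency matrix of a graph $G$ on $N$ vertices whose maximum degree is bounded by some $\Delta(\alpha)$, and in this correspondence the discrepancy $N - n$ is controlled (up to an $O_\alpha(1)$ slack) by the multiplicity of $\lambda = (1-\alpha)/(2\alpha)$ as the second-largest eigenvalue of $G$. Thus it suffices to show that any graph $G$ with maximum degree at most $\Delta(\alpha)$ and $\lambda_2(G) = \lambda$ has second-eigenvalue multiplicity at most $C(\alpha)$, provided $k(\lambda) = \infty$.

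To establish this multiplicity bound, I would argue by contradiction. Suppose there are bounded-degree graphs $G_m$ with $\lambda_2(G_m) = \lambda$ whose second-eigenvalue multiplicity $m_m \to \infty$. The strategy, following \cite{JTYZZ} and \cite{MRS}, is: pick many orthogonal $\lambda$-eigenvectors, cluster vertices according to their local behavior in these eigenvectors, and produce a vertex $v$ together with a finite subgraph $H_v$ containing $v$ whose Perron eigenvalue is exactly $\lambda$, contradicting $k(\lambda) = \infty$. The existing arguments yield $H_v$ of diameter growing with $m$; to obtain the target $O_\alpha(1)$ bound, one would instead need $H_v$ whose size is bounded only in terms of $\alpha$.

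The main obstacle is achieving this constant-size spectral certificate. My approach would combine trace/moment methods with the algebraic constraints on $\lambda$. Because any realizer of $\lambda$ as a top eigenvalue forces $\lambda$ to be a totally real algebraic integer dominating its Galois conjugates, one would hope to use the closed-walk counts $(A_G^{2r})_{vv}$ for a large constant $r=r(\alpha)$ to show that, in a bounded-degree graph with many $\lambda$-eigenvectors, the $r$-neighborhood of a positive proportion of vertices is already spectrally rigid enough to pin down a finite list of local patterns. A refined averaging or pigeonhole would then select a $v$ whose $r$-neighborhood either realizes $\lambda$ as a top eigenvalue (contradiction) or extends to an $r$-neighborhood that does. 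The step most likely to fail is ruling out local patterns whose spectral contribution equals $\lambda$ in the limit but which never close up into a finite subgraph with top eigenvalue $\lambda$; this is precisely the quantitative gap between $o(n)$-type bounds and the conjectured $O_\alpha(1)$, and overcoming it would seem to require either a deep new rigidity result for totally real algebraic integers appearing as graph spectral radii, or a genuinely new global averaging argument that bypasses local certificates altogether.
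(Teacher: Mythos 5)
The statement you are trying to prove is in fact \emph{false}, and the main result of this paper (\cref{thm:main}) is a disproof of it for infinitely many $\alpha$. The paper constructs, for each of infinitely many values $\lambda = 2\sqrt{u^2+1}-1$ with $k(\lambda) = \infty$, a sequence of bounded-degree regular graphs on $n$ vertices whose second eigenvalue is exactly $\lambda$ with multiplicity $\Omega(\log\log n)$ (\cref{thm:large-mult-nonint}), and then converts these graphs into equiangular line configurations via \cref{lem:regular-to-lines} to obtain $N_\alpha(n) \geq n + \Omega(\log\log n)$. Since this lower bound is unbounded, it contradicts $N_\alpha(n) = n + O_\alpha(1)$, so no proof of the conjecture can exist.

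The gap you flagged at the end of your proposal --- ``ruling out local patterns whose spectral contribution equals $\lambda$ in the limit but which never close up into a finite subgraph with top eigenvalue $\lambda$'' --- is not merely the hardest step; it is the precise point at which the conjecture fails. The construction in \cref{sec:mult-gen} produces exactly such graphs: they are $k$-partite graph lifts built by iterated Ramanujan $2$-lifts (\cref{thm:ramanujan}) followed by a carefully chosen multiplicity-incrementing $2$-lift coming from a concentration result on random factors (\cref{prop:concentration-general}). In these graphs the $\lambda$-eigenvectors are supported globally (they arise from the cover structure of the lifts) and no finite subgraph has spectral radius $\lambda$; indeed, by \cref{cor:non-top-eig}, $\lambda = 2\sqrt{u^2+1}-1$ is never a graph spectral radius because it is dominated in magnitude by its Galois conjugate $-2\sqrt{u^2+1}-1$. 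So the ``spectral rigidity'' you were hoping for simply does not hold. Your proposed reduction to an $O_\alpha(1)$ multiplicity bound is the right frame for interpreting the problem, but the bound itself cannot be proven because it is refuted by the construction in this paper.
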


Our main result is a disproof of \cref{conj:main} for infinitely many $\alpha$, demonstrating the existence of ``intermediate'' behavior of $N_\alpha(n)$.

\begin{theorem}\label{thm:main} For infinitely many $\alpha\in(0,1)$, the function $N_\alpha(n)$ satisfies 
$$n+\Omega(\log\log n)\leq N_\alpha(n)\leq n+O(n/\log\log n).$$
\end{theorem}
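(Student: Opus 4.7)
The upper bound $N_\alpha(n)\leq n+O(n/\log\log n)$ follows immediately from \cref{thm:JTYZZ}(b) and the quantitative improvement of McKenzie, Rasmussen, and Srivastava on the multiplicity of the second eigenvalue of a bounded-degree regular graph, applied to any $\alpha$ for which $k((1-\alpha)/(2\alpha))=\infty$. All the substance lies in the lower bound.

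For the lower bound the plan is a standard reduction: from a $d$-regular graph $G$ on $N$ vertices whose second eigenvalue is exactly $\lambda$ with multiplicity $m$, I will produce $N$ equiangular lines in $\mathbb{R}^{N-m}$ with common angle $\arccos\alpha$, where $\alpha=1/(1+2\lambda)$. On the hyperplane $\mathbf{1}^\perp$ the Seidel matrix $S_G=J-I-2A_G$ agrees with $-I-2A_G$, so $-1-2\lambda$ is an eigenvalue of $S_G$ of multiplicity $m$; hence $I+\alpha S_G$ is positive semidefinite of rank $N-m$ (using $\mu_i\leq\lambda$ for every non-Perron eigenvalue $\mu_i$), and realizing it as a Gram matrix yields
$$N_\alpha(N-m)\geq N=(N-m)+m.$$
It therefore suffices to build, for some $\lambda>0$ with $k(\lambda)=\infty$ and infinitely many $N$, a $d$-regular graph on $N$ vertices with second eigenvalue exactly $\lambda$ and multiplicity $m=\Omega(\log\log N)$; the requirement of infinitely many $\alpha$ will be met by performing the construction for an infinite family of such $\lambda$.

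To produce the graphs I would fix a totally real positive algebraic integer $\lambda$ having a Galois conjugate $\lambda'$ with $|\lambda'|>\lambda$, which forces $k(\lambda)=\infty$ by Perron-Frobenius. Starting from a small bipartite gadget $H_0$ whose spectrum contains $\lambda$, I would take many disjoint copies and glue them into a connected $d$-regular graph by means of a ``connector,'' chosen as a random factor of a large bipartite $d$-regular host graph. The gluing is arranged so that the $\lambda$-eigenvectors of the copies combine into a $\lambda$-eigenspace of large dimension for the assembled graph, while all remaining non-trivial eigenvalues arise from the connector.

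The main obstacle is verifying both spectral conditions simultaneously: that $\lambda$ remains an exact eigenvalue of the prescribed multiplicity, and that no other eigenvalue of $A_G$ (apart from the trivial $d$) exceeds $\lambda$. The first condition is algebraic and should be handled by making the construction respect an explicit subspace annihilated by $A_G-\lambda I$, ensured by the symmetry of the gluing. The second is the heart of the argument and is where the ``distribution on factors of bipartite graphs possessing concentration properties'' advertised in the abstract must do its work: one needs that with positive probability a random connector has spectral radius strictly below $\lambda$ on the complementary subspace, controlling both the Galois conjugates of $\lambda$ and any incidental eigenvalues, in the spirit of Friedman-type theorems for random regular graphs. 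Achieving sharp enough concentration so that multiplicity $m$ is attainable with only $\exp(\exp(O(m)))$ vertices is precisely what yields the $\log\log n$ rate.
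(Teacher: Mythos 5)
Your reduction from graphs to lines (the Seidel matrix $J-I-2A_G$ restricted to $\mathbf{1}^\perp$) is exactly the paper's Lemma~6.4(a), and gives $N_\alpha(N-m)\geq N$ at the specific dimensions $N-m$ produced by your construction. But this alone does not prove the theorem. Your graph construction, like the paper's, produces vertex counts $N_0 < N_1 < \cdots$ that grow doubly exponentially in $i$, so the dimensions $N_i - m_i$ where you get a nontrivial bound form an extremely sparse sequence. For $n$ strictly between $N_i-m_i$ and $N_{i+1}-m_{i+1}$, the naive monotonicity $N_\alpha(n)\geq N_\alpha(N_i - m_i)\geq N_i$ is useless: once $n$ is much larger than $N_i$ you no longer even have $N_\alpha(n)>n$. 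To conclude $N_\alpha(n)\geq n+\Omega(\log\log n)$ for \emph{all} large $n$, you must be able to extend a Gram matrix of the right form to every dimension $\geq N_i - m_i$. The paper does this in Lemma~6.4(b), which shows that you may append rows and columns (with diagonal entry $1$ and off-diagonal entry $\alpha$) while preserving positive semidefiniteness, but only under the extra spectral gap hypotheses $\lambda\geq 2\sqrt{d-1}$ and $\lambda\geq 2d/3$. These conditions are not free: they force the degree $d$ to be within a constant factor of $\lambda$, which is what drives the precise choice of the $4\times 4$ matrices $M,M'$ with $\lambda_1(M)=2t+3\leq 3\lambda_1(M')/2$ in Lemma~5.7. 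Your proposal does not impose any such constraint on $d$ and does not address the in-between dimensions at all, so as written the lower bound only holds along a thin sequence of $n$.

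A secondary but real concern is your description of the construction. You sketch a ``glue many copies via a random connector'' scheme and appeal to Friedman-type spectral concentration for the connector. The paper explicitly notes (Remark~4.5) that its random-factor distribution does \emph{not} have enough independence to get the spectral-norm concentration such an argument would require. Instead, the paper's route is: (i) iterate Marcus--Spielman--Srivastava Ramanujan $2$-lifts, which are deterministic, to flatten the $L^\infty$ norms of the large-eigenvalue eigenvectors; (ii) only then choose a random factor, and prove merely \emph{pointwise} concentration of bilinear forms $u^\intercal A_H v$ (Proposition~3.4), which suffices once the eigenvectors are flat; (iii) use the $M'$-signing structure to guarantee that $\lambda=\lambda_1(M')$ is an \emph{exact} eigenvalue of the lift, not just approximate. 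Your ``gadget with $\lambda$ in its spectrum'' idea also skips the step of ensuring that $\lambda$ arises exactly after gluing, which is what the integer-matrix signing machinery is for.
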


We obtain this result via constructing a family of bounded-degree regular graphs with second eigenvalue exactly $\lambda=(1-\alpha)/(2\alpha)$ for various $\lambda$. The second eigenvalue multiplicity we construct is much smaller than that in \cite{HSZZ}. However, to our knowledge, this is the first result giving unbounded second eigenvalue multiplicity when the eigenvalue is fixed.

\begin{theorem}\label{thm:large-mult-nice} For infinitely many positive real $\lambda$, there exist infinitely many connected $n$-vertex graphs with bounded degree and second eigenvalue exactly $\lambda$ of multiplicity $\Omega(\log\log n)$. In particular, this holds when
\begin{enumerate}[(a)]
    \item $\lambda$ is a sufficiently large integer, or
    \item $\lambda=2\sqrt{u^2+1}-1$ for a sufficiently large integer $u$.
\end{enumerate}
\end{theorem}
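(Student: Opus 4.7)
I plan to prove the theorem by constructing, for each of cases (a) and (b), an infinite family of bounded-degree regular $n$-vertex graphs whose second eigenvalue is exactly $\lambda$ with multiplicity $\Omega(\log\log n)$. The approach is to take many copies of a small base graph $H_0$ whose spectrum contains $\lambda$, glue them together by a random bipartite factor, and use spectral concentration of such factors to control the remaining eigenvalues. For case (a), I would take $H_0 = K_{\lambda+1}$, which is $\lambda$-regular with $\lambda$ as its simple top eigenvalue (all-ones eigenvector) and $-1$ as its only other eigenvalue. For case (b), observe that the Galois conjugate $-1 - 2\sqrt{u^2+1}$ of $\lambda = 2\sqrt{u^2+1}-1$ has larger absolute value than $\lambda$, so Perron--Frobenius rules out $\lambda$ as the top eigenvalue of any graph (consistent with $k(\lambda) = \infty$ in the sense of \cref{def:spec-radius-order} and case (b) of \cref{thm:JTYZZ}). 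I would therefore construct a small graph $H_0$ on $O_u(1)$ vertices in which $\lambda$ appears as a \emph{non-principal} eigenvalue, for instance via an explicit equitable partition whose quotient matrix has characteristic polynomial divisible by $x^2 + 2x - (4u^2+3)$.

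Given $H_0$, the combined graph $G$ is formed by taking $m$ disjoint copies of $H_0$ and adding cross-edges sampled from a random $k$-regular spanning subgraph (a ``factor'') of a suitable auxiliary bipartite graph $B$. The choice of $B$ and of the distribution on factors is arranged so that: (i) $G$ is $d$-regular for some fixed $d > \lambda$ and connected; (ii) the $\lambda$-eigenvectors of the $m$ copies lift to linearly independent eigenvectors of $A_G$ with eigenvalue $\lambda$; and (iii) on the orthogonal complement of the lifted subspace, apart from the Perron direction, the spectrum of $A_G$ lies strictly below $\lambda$. Property (ii) can be imposed algebraically by having the cross-edges act as zero (or as a scalar) on the $\lambda$-eigenspace within each copy, exploiting the automorphism structure of $H_0$; properties (i) and (ii) are thus combinatorial conditions that can be built into the construction directly.

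The main obstacle is establishing (iii). This is where the concentration property of the factor distribution alluded to in the abstract enters: a trace-moment or switching-based argument in the spirit of Friedman's theorem, adapted to the specific distribution, should show that with positive probability the ``fine'' spectrum of $A_G$ is pushed to at most $\lambda - \Omega(1)$, so that $\lambda$ is the second eigenvalue of $G$. I expect the $\Omega(\log\log n)$ rate to arise because guaranteeing the concentration bounds uniformly for the $m$ lifted eigenvectors and for the chosen bipartite structure simultaneously forces $B$ to have size at least doubly exponential in $m$; this then caps $m$ at $O(\log\log n)$ for the resulting graph $G$ on $n$ vertices, which matches the claimed multiplicity lower bound after taking $n$ large.
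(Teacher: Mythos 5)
Your construction is structurally different from the paper's, and there are genuine gaps that would prevent it from going through as stated.

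\textbf{The base-graph choice for case (a) cannot work as described.} You propose $H_0 = K_{\lambda+1}$ and want the all-ones vector of each copy to remain a $\lambda$-eigenvector of the combined graph $G$ (your property (ii)), achieved by having the cross-edges ``act as zero'' on it. But if $v_j$ is the indicator of copy $j$ (the only $\lambda$-eigenvector of that copy), then $A_{\mathrm{cross}} v_j$ is a nonnegative vector supported \emph{outside} copy $j$; it cannot equal $0$ unless copy $j$ has no cross-edges (then $G$ is disconnected), and it cannot equal $c\, v_j$ since the supports are disjoint. More fundamentally, $\sum_j v_j = \mathbf{1}_G$ is the Perron eigenvector of the $d$-regular graph $G$ with eigenvalue $d > \lambda$, so the $v_j$ cannot simultaneously all be $\lambda$-eigenvectors. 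The paper sidesteps this entirely: $\lambda$ is \emph{not} an eigenvalue of its base graph $K_{d,d}$ at all. The new $\lambda$-eigenvectors are produced by the $2$-lift: writing $A_{\tilde G}$ in block form with $A_G \pm A_s$, each lift contributes the eigenvector $\mathbf{1}\oplus(-\mathbf{1})$ of eigenvalue $\lambda_1(A_s)=d-2a$, and the $a$-factor $H$ is chosen so that $d-2a$ is the top eigenvalue of $A_s$. These eigenvectors live in the new ``antisymmetric'' subspace created by the lift and are not inherited from any subgraph spectrum.

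\textbf{The proposed concentration step is the one the paper rules out.} You invoke a ``trace-moment or switching-based argument in the spirit of Friedman's theorem'' to bound the remaining spectrum. \cref{rmk:factor-improvement} explicitly notes that this framework (Bilu--Linial/Friedman-style spectral-radius bounds) seems to fail for the factor distribution because there is not enough independence in it. The paper instead proves concentration only for fixed quadratic forms $u^\intercal A_H v$ (\cref{prop:concentration-general}, via bounded differences on cycle choices) and then union-bounds over an orthonormal basis of the small ``dangerous'' subspace. This only works if those basis vectors are $L^\infty$-flat, which is precisely why the Marcus--Spielman--Srivastava Ramanujan lifting step is inserted before each factor selection — a step absent from your proposal.

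\textbf{The $\Omega(\log\log n)$ rate is not accounted for.} You assert that making the auxiliary bipartite graph $B$ ``doubly exponential in $m$'' is forced by uniform concentration, but no mechanism is given for why a one-shot gluing would require this. In the paper, the double exponential is structural: \cref{prop:big-aug-step} and \cref{prop:big-aug-step-gen} blow up the vertex count by a factor of about $n^8$ per increment (to flatten eigenvectors enough for \cref{cor:factor-selection}), so after $i$ rounds the graph has $n_0^{9^i}$ vertices. It is the \emph{iteration}, not any single random sampling, that caps the multiplicity at $\log\log n$. Your single-shot design gives no comparable reason for the rate, and absent a specific argument, a flat construction with $m$ copies would naively suggest multiplicity $\Theta(\log n)$ or better, which the paper says it cannot obtain (\cref{rmk:factor-improvement}).

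In short: you should look at the paper's $2$-lift machinery — the new $\lambda$-eigenvectors come from signed adjacency matrices, $\lambda$ need not be in the base graph's spectrum, the concentration is for quadratic forms (not spectral norm), and the $\log\log n$ rate is a consequence of iterated $n \mapsto n^{O(1)}$ blowups rather than a one-shot size requirement.
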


In fact, we can prove this result for any real $\lambda$ which is the largest eigenvalue of a symmetric integer matrix satisfying some technical conditions; see \cref{def:matrix-set} and \cref{cor:general-loglog}. A more precise version of part (b) --- \cref{thm:large-mult-nonint} --- will be enough to give \cref{thm:main}.

\section{Proof overview}\label{sec:overview}

In this section, we give an overview of how the graphs satisfying \cref{thm:large-mult-nice} are constructed. Inspired by Marcus, Spielman, and Srivastava \cite{MSS}, the construction will proceed inductively via the \emph{$2$-lift} operation, first studied in conjunction with eigenvalues by Bilu and Linial \cite{BiluLinial}. We construct a sequence $G_0,G_1,\dots$ of graphs so that each $G_i$ has only one eigenvalue greater than $\lambda$ and for which $\lambda$ is an eigenvalue of $G_i$ of multiplicity at least $i$. 

\vspace{2mm}

For any graph $G$, we denote by $A_G$ its \emph{adjacency matrix}, the $V(G)\times V(G)$ matrix where the entry $vw$ for $v,w\in V(G)$ is $1$ if and only if $vw\in E(G)$. For any square matrix $M$ and any positive integer $i$ at most the dimension of $M$, we let $\lambda_i(M)$ denote the $i$th largest eigenvalue of $M$, where eigenvalues are counted with multiplicity.

\subsection*{Integer second eigenvalue} When $\lambda$ is an integer, i.e. to obtain \cref{thm:large-mult-nice}(a), the process is as follows:

\begin{enumerate}
    \item Begin with the bipartite graph $G_0=K_{d,d}$ for some $d$ slightly larger than $\lambda$.
    \item From $G_i$, repeatedly apply $2$-lifts which (i) add no new large eigenvalues and (ii) decrease the $L^\infty$ norm of the existing eigenvectors with large eigenvalue. To construct such lifts, we use a result of Marcus, Spielman, and Srivastava \cite{MSS}; the precise properties we need are stated here in \cref{cor:ramanujan}. Call the graph resulting from these successive lifts $G_i'$.
    \item Once we have performed these lifts, select a subset of edges of $G_i'$ randomly from an appropriate distribution (see \cref{sec:random-factor}) such that each vertex of $G_i'$ is incident to exactly $a:=(d-\lambda)/2$ edges in the subset. This subset informs a choice of $2$-lift $G_{i+1}$ of $G_i'$. For this lift, the vector assigning $1$ to all vertices in one copy of $V(G_i')$ and $-1$ to all vertices in the other copy is an eigenvector of $A_{G_{i+1}}$ with eigenvalue $d-2a=\lambda$. Assuming $d$ is chosen suitably, this is with positive probability the largest new eigenvalue added. 
\end{enumerate}

\subsection*{Non-integer second eigenvalue} To obtain \cref{thm:large-mult-nice}(b), we follow the same general strategy as above, but with some modifications. In step (1), instead of constructing $d$-regular bipartite graphs, we use $k$-partite graphs for which the graph between each pair of parts is a regular bipartite graph. In step (2), we perform the $2$-lifts of Marcus, Spielman, and Srivastava on each pair of parts. Finally, in (3), we choose a subset of edges from between each pair of parts which forms a regular bipartite graph of specified degree. Varying the degrees we choose between parts allows us to vary the eigenvalue which repeats. For a more detailed overview of how the degrees between parts are chosen, see the beginning of \cref{sec:mult-gen}.

\subsection*{$2$-lifts} Given a graph $G$, a \emph{signing} of $G$ is a map $s:E(G)\to\{\pm 1\}$. Each signing possesses an adjacency matrix $A_s$ where entries of $A_G$ corresponding to an edge $e$ are replaced with $s(e)$. To any signing $s$ of $G$, one may associate a graph $\tilde G$ on the doubled vertex set $\{v_0,v_1\colon v\in V(G)\}$ wherein, for each edge $uv\in E(G)$, edges $u_0v_0$ and $u_1v_1$ are drawn in $\tilde G$ if $s(uv)=1$, and edges $u_0v_1$ and $u_1v_0$ are drawn in $\tilde G$ if $s(uv)=-1$. The adjacency matrix of $\tilde G$ can be written in block form as
$$A_{\tilde G}=\frac 12\begin{pmatrix}A_G+A_s&A_G-A_s\\A_G-A_s&A_G+A_s\end{pmatrix}.$$
Such a graph $\tilde G$ is called a \emph{$2$-lift} of $G$, on account of the $2$-covering map $\tilde G\to G$ mapping $v_0$ and $v_1$ to $v$. We make the following observations:
\begin{enumerate}
    \item The multiset of the eigenvalues of $A_{\tilde G}$ is the disjoint union of that of $A_G$ and that of $A_s$. If $x$ is an eigenvector of $A_G$ (resp. $A_s$) with eigenvalue $\mu$, then $x\oplus x$ (resp. $x\oplus -x$) is an eigenvector of $A_{\tilde G}$ with the same eigenvalue $\mu$.
    \item If a vertex $v\in V(G)$ has degree $d_v$, the vertices $v_0,v_1\in V(\tilde G)$ above $v$ also have degree $d_v$. 
    \item If $G$ is $k$-partite, with $V(G)=V_1\sqcup V_2\sqcup\cdots\sqcup V_k$, then $\tilde G$ is also $k$-partite; there are no edges between vertices corresponding to elements of $V_i$ for each $i$. 
\end{enumerate}

\subsection*{Ramanujan lifts} A special case of a theorem of Marcus, Spielman, and Srivastava gives the existence of lifts which, in a very strong sense, add no large eigenvalues.

\begin{theorem}[{\cite[Theorem 5.3]{MSS}}]\label{thm:ramanujan} Every $d$-regular graph possesses a signing whose adjacency matrix $A_s$ has second eigenvalue at most $2\sqrt{d-1}$.
\end{theorem}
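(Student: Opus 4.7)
The plan is to prove the slightly stronger statement that there exists a signing $s$ with $\lambda_1(A_s)\le 2\sqrt{d-1}$, from which the stated bound on the second eigenvalue follows immediately. I would follow the interlacing families method of \cite{MSS}. Consider the family of characteristic polynomials $\chi_s(x):=\det(xI-A_s)$ indexed by signings $s\in\{\pm1\}^{E(G)}$; the goal is to exhibit at least one $s$ whose polynomial has top root at most $2\sqrt{d-1}$.

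The first step is to identify the expected polynomial $p(x)=\EE_s[\chi_s(x)]$ over a uniform random signing. Expanding $\det(xI-A_s)$ as a sum over permutations of $V(G)$, only those $\sigma$ that decompose as products of fixed points and transpositions along edges of $G$ survive averaging: any cycle of length $\ge 3$ uses each of its edges exactly once, so its contribution vanishes upon averaging independent $\pm 1$ signs. A short sign-and-coefficient bookkeeping then yields $p(x)=\mu_G(x)$, the matching polynomial of $G$. By the Heilmann--Lieb theorem, all roots of $\mu_G$ are real and lie in $[-2\sqrt{d-1},2\sqrt{d-1}]$ whenever $G$ is $d$-regular.

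To transfer this bound from the expected polynomial to an individual $\chi_s$, I would invoke the interlacing families framework. Construct a binary tree of depth $|E(G)|$ whose internal node at depth $k$ is labeled by the conditional expectation of $\chi_s$ given the signs of the first $k$ edges; the leaves are the individual $\chi_s$ and the root is $\mu_G$. The main interlacing lemma of \cite{MSS} states that if, at every internal node, the two children share a common real-rooted interlacer, then at least one leaf polynomial has top root no greater than the top root of the root polynomial, producing the required signing.

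The main obstacle is verifying the interlacing hypothesis at every internal node. A sufficient condition (another lemma of \cite{MSS}) is that every convex combination of the two children is real-rooted, which reduces everything to proving real-rootedness of all conditional expected polynomials appearing in the tree. This is the technical heart of the MSS proof and proceeds via the theory of real stable polynomials: one shows that $\det(xI-\sum_e z_e A_e)$ admits a real-stable multivariate form in $x$ and the auxiliary edge variables $z_e$, and that each conditioning or averaging step along the tree corresponds to a real-stability-preserving operation. Univariatizing at the end produces the required real-rootedness, closing the argument.
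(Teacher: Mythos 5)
The paper gives no independent proof of this theorem---it is cited directly as Theorem 5.3 of Marcus, Spielman, and Srivastava---and your sketch faithfully reproduces that paper's interlacing-families argument: the Godsil--Gutman identity expressing $\EE_s[\det(xI-A_s)]$ as the matching polynomial, the Heilmann--Lieb root bound $[-2\sqrt{d-1},2\sqrt{d-1}]$ for graphs of maximum degree $d$, and the real-stability machinery used to verify the common-interlacer hypothesis at each node of the tree. Your observation that the argument in fact yields the stronger bound $\lambda_1(A_s)\leq 2\sqrt{d-1}$ is correct, and indeed this stronger form is what the paper actually invokes in the proof of \cref{cor:ramanujan}, where it writes ``$\lambda_1(A_s)\leq 2\sqrt{d-1}$'' despite the theorem statement only asserting a bound on the second eigenvalue.
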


\noindent We will make use of the following restatement in the case of bipartite graphs.

\begin{corollary}\label{cor:ramanujan} For every $d$-regular bipartite graph $G$, there exists a $2$-lift $\tilde G$ of $G$ for which, for every eigenvalue $\mu$ of $A_{\tilde G}$ satisfying $|\mu|>2\sqrt{d-1}$, the $\mu$-eigenspace of $A_{\tilde G}$ has the same dimension as that of $A_G$. Moreover, if $\{x_1,\dots,x_k\}$ is a basis of the $\mu$-eigenspace of $A_G$, then $\{x_1\oplus x_1,\dots,x_k\oplus x_k\}$ is a basis of the $\mu$-eigenspace of $A_G$.
\end{corollary}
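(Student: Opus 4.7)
The plan is to apply \cref{thm:ramanujan} to $G$ and use the bipartite symmetry of the signed adjacency matrix $A_s$ to conclude that every eigenvalue of $A_s$ satisfies $|\mu| \le 2\sqrt{d-1}$. The corollary will then follow directly from observation (1) about $2$-lifts preceding the statement.

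Since $G$ is bipartite with parts $U$ and $V$, the matrix $A_s$ has zero diagonal blocks; conjugating by $D := \operatorname{diag}(I_U, -I_V)$ gives $DA_sD^{-1} = -A_s$, so $A_s$ is similar to $-A_s$ and hence the spectrum of $A_s$ is symmetric about $0$. In particular, nonzero eigenvalues of $A_s$ come in pairs $\{\mu,-\mu\}$ of equal multiplicity. Applying \cref{thm:ramanujan} to $G$ produces a signing $s$ for which $A_s$ has at most one eigenvalue (in absolute value) exceeding $2\sqrt{d-1}$; combined with the pairing, this forces no such eigenvalue to exist, so all eigenvalues of $A_s$ lie in $[-2\sqrt{d-1},2\sqrt{d-1}]$.

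From observation (1), the eigenvalues of $A_{\tilde G}$ (counted with multiplicity) form the disjoint union of those of $A_G$ --- lifted via $x \mapsto x \oplus x$ --- and those of $A_s$ --- lifted via $x \mapsto x \oplus (-x)$. For any $\mu$ with $|\mu|>2\sqrt{d-1}$, no eigenvalue of $A_s$ contributes, and consequently $\dim\ker(A_{\tilde G}-\mu I)=\dim\ker(A_G-\mu I)$. Moreover, given a basis $\{x_1,\dots,x_k\}$ of the $\mu$-eigenspace of $A_G$, the lifts $\{x_1\oplus x_1,\dots,x_k\oplus x_k\}$ are linearly independent $\mu$-eigenvectors of $A_{\tilde G}$ (independence is inherited from that of the $x_i$), and therefore form a basis of the $\mu$-eigenspace of $A_{\tilde G}$.

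The main obstacle is the upgrade step, translating \cref{thm:ramanujan}'s bound on the ``second eigenvalue'' of $A_s$ into the full spectral-radius bound $\|A_s\|\le 2\sqrt{d-1}$; this is precisely where the bipartite hypothesis is used, via the symmetric-spectrum argument above. Everything else is a mechanical unpacking of observation (1).
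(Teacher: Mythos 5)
Your overall route---apply \cref{thm:ramanujan} to get a good signing, upgrade to a full spectral-radius bound on $A_s$ via bipartite symmetry, then invoke observation (1)---is the same as the paper's, and the conjugation by $\operatorname{diag}(I_U,-I_V)$ is exactly the symmetry argument the paper has in mind.

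However, the ``at most one eigenvalue (in absolute value) exceeding $2\sqrt{d-1}$'' step is not justified, and the pairing does not then force a contradiction. Read literally, the bound $\lambda_2(A_s)\le 2\sqrt{d-1}$ controls only the positive tail of the spectrum: it says at most one eigenvalue exceeds $2\sqrt{d-1}$, and gives no information on how negative $\lambda_n(A_s)$ can be. Combined with spectral symmetry you can conclude only that at most \emph{two} eigenvalues have absolute value exceeding $2\sqrt{d-1}$, and those two (namely $\lambda_1(A_s)$ and $-\lambda_1(A_s)$) already form a $\pm\mu$ pair, so no contradiction arises; e.g.\ a symmetric spectrum $\{3,0,\dots,0,-3\}$ has $\lambda_2\le 2$ but spectral norm $3$. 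What the argument actually needs---and what the theorem of Marcus, Spielman, and Srivastava truly gives, and what the paper's own proof invokes---is $\lambda_1(A_s)\le 2\sqrt{d-1}$, i.e.\ that \emph{all} new eigenvalues are at most $2\sqrt{d-1}$; the phrase ``second eigenvalue'' in \cref{thm:ramanujan} as stated is imprecise. With $\lambda_1(A_s)\le 2\sqrt{d-1}$, symmetry yields $\|A_s\|=\max(\lambda_1(A_s),-\lambda_n(A_s))=\lambda_1(A_s)\le 2\sqrt{d-1}$ immediately, with no counting needed. The rest of your proof (the unpacking of observation (1)) is fine.
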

\begin{proof} Let $s$ be a signing of $G$ as in \cref{thm:ramanujan} so that $\lambda_1(A_s)\leq 2\sqrt{d-1}$. Since $A_s$ is a signing of a bipartite graph, its spectrum is symmetric about $0$, and so $\|A_s\|\leq 2\sqrt{d-1}$. The statement then follows from observation (1) on $2$-lifts.
\end{proof}

\subsection*{Multiplicity-incrementing lifts and graph factors} While we need Ramanujan lifts to ensure we don't have too many large eigenvalues, we will construct more specialized $2$-lifts for the step in which second eigenvalue multiplicity is incremented. Such lifts come from choosing appropriate \emph{$a$-factors}.

\begin{definition}\label{def:factor} Given a $d$-regular graph $G$ and an integer $0\leq a\leq d$, an \emph{$a$-factor} of $G$ is an $a$-regular subgraph of $G$ on the vertex set $V(G)$. Equivalently, $a$-factors are determined by selection of a subset $S\subset E(G)$ in which every vertex of $G$ is incident to exactly $a$ edges in $S$.
\end{definition}

As stated in the outline of our approach (specialized to integer eigenvalue), we shall lift by a signing in which each vertex is incident to the same number of edges signed $-1$. Letting $H$ denote the graph consisting of the edges between two parts which are signed $-1$, this is equivalent to $H$ being a factor of $G$. Consider choosing an $a$-factor $H$ of a $d$-regular graph $G$ at random from a distribution in which each edge appears in $H$ with equal probability, and let $s$ be the signing determined by letting $s(e)=-1$ if and only if $e\in E(H)$. Then
$$\EE[A_s]=\EE[A_G-2A_H]=\left(1-\frac{2a}d\right)A_G.$$
Given any $H$, $A_s$ will have an eigenvector (the all-ones vector) with eigenvalue $d-2a$. On the other hand, if we can choose $H$ so that $A_s$ is close in spectral norm to its expectation, then the eigenvalues of $A_G$ should ``shrink'' by a factor of $1-2a/d$ up to some small error, and $d-2a$ should be the largest eigenvalue of $A_s$. We are not able to show that $H$ can be chosen in this way. However, in accordance with this goal, we give the following proposition, which gives the existence of a distribution on $a$-factors of a $d$-regular bipartite graph $G$ that, up to a matrix of small spectral norm, displays concentration for Lipschitz functions of the form $u^\intercal A_Hv$. Once we have performed enough Ramanujan lifts, this concentration result will suffices to show the existence of a signing $s$ with $\lambda_1(A_s)=d-2a$.

\begin{proposition}\label{prop:concentration-general} Let $a$ and $d$ be positive integers with $0\leq a\leq d$. Let $G$ be a bipartite $d$-regular graph on $n$ vertices. There exists a probability distribution on the set of pairs $(H,M)$ where $H$ is an $a$-factor of $G$ and $M$ is an $n\times n$ matrix with $\|M\|\leq 6\sqrt d$ such that, for any vectors $u,v\in\RR^n$ and any real $t\geq 0$,
$$\Pr\left[\left|u^\intercal\left(A_H-\frac adA_G+M\right)v\right|>t(2+\sqrt 2)\sqrt{dn\lceil\log_2 n\rceil}\|u\|_\infty\|v\|_\infty\right]\leq 2(\log_2 d)e^{-t^2/2}.$$
\end{proposition}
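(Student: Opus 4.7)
The plan is to construct the random pair $(H, M)$ via $L := \lceil \log_2 d \rceil$ rounds of iterated random bisection of $G$, and to control $u^\intercal A_H v$ by a Doob martingale over these rounds. The combinatorial backbone is the standard fact that any $2m$-regular bipartite graph splits into two edge-disjoint $m$-regular bipartite subgraphs; this follows by decomposing each connected component into an Euler tour (all degrees being even) and coloring the edges of the tour alternately. After a preliminary augmentation making $d$ a power of two (whose effect is absorbed into $M$), I would iterate: set $G^{(0)} := G$, and at each level $\ell$ apply the bisection lemma with internal randomization (e.g.\ random starting vertices and orientations for Euler tours in each component) to split $G^{(\ell)}$ into halves $G^{(\ell)}_0, G^{(\ell)}_1$, then set $G^{(\ell+1)}$ to one of them via an independent uniform bit. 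The binary expansion of $a$ selects a collection of these nested subgraphs to form the output $a$-factor $H$, with $M$ correcting for both the initial augmentation and any local repairs needed to enforce the $a$-regularity of $H$ exactly.

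For fixed vectors $u, v$, consider the Doob martingale $X_\ell := \EE[u^\intercal A_H v \mid \mathcal F_\ell]$, where $\mathcal F_\ell$ is the $\sigma$-algebra generated by the first $\ell$ rounds of randomness. The crux of the argument is to show that each increment $X_\ell - X_{\ell-1}$, conditional on $\mathcal F_{\ell-1}$, is sub-Gaussian with variance proxy on the order of $(dn/2^\ell)\|u\|_\infty^2\|v\|_\infty^2$, matching the edge count of $G^{(\ell-1)}$. The internal randomization of the bisection is what makes this possible: it forces $u^\intercal(A_{G^{(\ell)}_0} - A_{G^{(\ell)}_1}) v$ to behave like a Rademacher sum over the edges of $G^{(\ell-1)}$, and a Hoeffding-type estimate then handles each increment. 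Summing the $L$ levels via Azuma--Hoeffding yields the stated deviation; the factor $\log_2 d$ in the failure probability arises from a union bound across levels, and the $\sqrt{\lceil \log_2 n\rceil}$ factor in the deviation from an internal union bound across the connected components / Euler tours at each level when passing from the Rademacher sum bound to a single-deviation bound.

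The main obstacle is the per-level increment estimate. Without exploiting internal randomization, the increment is only bounded deterministically by $|E(G^{(\ell-1)})|\|u\|_\infty\|v\|_\infty$, which is too large by polynomial factors. The fix is to carve each connected component into two regular halves in enough genuinely independent ways (over Euler tour starting points, orientations, and component-wise sign flips) that the resulting signed sum is Hoeffding-concentrated; designing and analyzing a clean randomized bisection that delivers this is the main technical work. Finally, the spectral bound $\|M\| \leq 6\sqrt d$ will follow by arranging the support of $M$ to lie in a bipartite subgraph of maximum degree $O(\sqrt d)$, coming from the $(2^L - d)$-regular padding (handled by a pre-sparsification so only $O(\sqrt d)$ extra edges hit each vertex) together with the final local repair step to convert an approximate factor into an exact one.
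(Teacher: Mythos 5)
Your overall architecture is closely parallel to the paper's: a recursive/iterated halving with $O(\log d)$ rounds, an Azuma-type argument across rounds, and an auxiliary matrix $M$ that absorbs the non-randomizable part. The paper also recurses on $d$ (take a random $d/2$-factor if $d$ even and $a\leq d/2$, complement if $a>d/2$, strip a perfect matching if $d$ odd), so the $2\log_2 d$ factor in the failure probability arises for the same reason in both arguments.

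However, there is a genuine gap, and you have essentially flagged it yourself: the per-level increment bound is not delivered by your randomized Euler-tour bisection. An Euler tour of a connected component is a single global object whose length can be $\Theta(|E|)$, and randomizing its starting vertex, orientation, and which alternating half you keep gives only $O(|E|)$ distinct outcomes --- nothing like a Rademacher sum over edges. Conditional on $\mathcal F_{\ell-1}$, the increment $u^\intercal(A_{G^{(\ell)}_0}-A_{G^{(\ell)}_1})v$ is then bounded only by $|E(G^{(\ell-1)})|\|u\|_\infty\|v\|_\infty$, which is polynomially too large, and no amount of ``enough genuinely independent ways'' of Euler-touring the same connected component changes the fact that the choice is one-dimensional per component. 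The paper closes this gap with a different decomposition at the halving step: peel off cycles, always taking a \emph{shortest} remaining cycle first. Cycles of length at most $2\lceil\log_2 n\rceil$ are each independently randomized (two alternating halves), giving bounded differences with increment $\operatorname{len}(C)\|u\|_\infty\|v\|_\infty$; the leftover edges form a graph with girth $>2\lceil\log_2 n\rceil$, which by a universal-cover/Catalan-number counting argument (\cref{lem:no-small-cycles-eig}) has spectral norm $O(\sqrt d)$ and is handled deterministically. That high-girth spectral bound is the key missing mechanism: it is what lets the deterministic part be absorbed into $M$ with the correct $\sqrt d$ scaling, and it is also what yields the $\sqrt{\lceil\log_2 n\rceil}$ factor, via $\sum_C \operatorname{len}(C)^2 \leq (\max_C\operatorname{len}(C))\sum_C\operatorname{len}(C)\leq 2\lceil\log_2 n\rceil\cdot|E(G)|$.

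Separately, your route to $\|M\|\leq 6\sqrt d$ does not work as sketched. Padding $d$ up to the next power of two can add on the order of $d$ extra incidences per vertex, so the support of that correction is not a bipartite graph of maximum degree $O(\sqrt d)$, and I don't see how ``pre-sparsification'' recovers this without breaking regularity. In the paper $M$ is never a low-degree graph: it is a signed half of the high-girth remainder at each level, contributing $\sqrt{2 d}$ per even halving step and $O(1)$ per odd step, and the recursion $c_1=0$, $c_{d}=c_{d-1}+1$ ($d$ odd), $c_d=c_{d/2}+\sqrt{2d}$ ($d$ even) sums to $c_d\leq 6\sqrt d$. You should replace the power-of-two padding by the paper's ``strip an arbitrary perfect matching when $d$ is odd'' step, which costs only $O(1)$ in spectral norm per occurrence.
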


\noindent The matrix $M$ is more an artifact of the proof than an essential component of the distribution, and one should think of the distribution described in the proposition as a distribution solely on $a$-factors with some ``extra data'' attached which is useful for analysis.

\vspace{2mm}

We conclude this section with an outline of the remainder of the paper. In \cref{sec:random-factor}, we show \cref{prop:concentration-general} and give a corollary which allows us to construct $a$-factors whose adjacency matrices are, on particular subspaces of $\RR^{V(G)}$, close to $(a/d)A_G$ in spectral norm. In \cref{sec:mult-int}, we apply this construction to show case (a) of \cref{thm:large-mult-nice}, giving large multiplicity of large integer second eigenvalues from regular bipartite graphs. In \cref{sec:mult-gen}, we generalize this construction to $k$-partite graphs and show case (b) of \cref{thm:large-mult-nice}. Then, in \cref{sec:equi}, we apply the graphs constructed in \cref{thm:large-mult-nice}(b) to the problem of equiangular lines, giving \cref{thm:main}.

\section{Random factors of graphs}\label{sec:random-factor}

In this section, we show \cref{prop:concentration-general}, which gives a distribution on $a$-factors $H$ of any $d$-regular bipartite graph $G$ which, up to a matrix of small spectral norm, displays relatively tight concentration. The distribution is constructed as follows:

\begin{enumerate}
    \item In the special case when $d$ is even and $a=d/2$:
    \begin{enumerate}
        \item Partition $G$ into edge-disjoint cycles, at each step adding the smallest remaining cycle to the partition. \cref{lem:no-small-cycles-eig} shows that the union of the large cycles has small spectral norm.
        \item Note that all cycles in the partition have even length. From each cycle with $2\ell$ edges, choose an alternating set of $\ell$ edges uniformly at random from the two possible choices and add this set to $H$. For large cycles, one may make the choices deterministically. For example, if one such cycle is $v_1v_2v_3v_4v_5v_6$, add either the edges $\{v_1v_2,v_3v_4,v_5v_6\}$ or the edges $\{v_2v_3,v_4v_5,v_6v_1\}$ to $H$.
    \end{enumerate}
    \item Using (1) as a subroutine, proceed recursively:
    \begin{enumerate}
        \item In the base case of $d=1$, there is nothing to choose: $G$ has a unique $0$-factor and a unique $1$-factor.
        \item If $a>d/2$, construct a random $(d-a)$-factor and take its complement in $G$.
        \item If $a\leq d/2$ and $d$ is even, construct a random $(d/2)$-factor $G'$ of $G$ using (1), and then construct a random $a$-factor of the $d/2$-regular bipartite graph $G'$.
        \item If $d$ is odd, remove a $1$-factor (i.e. a perfect matching) from $G$ arbitrarily to form a $(d-1)$-regular graph $G'$, and then find a random $a$-factor of $G'$.
    \end{enumerate}
\end{enumerate}
\noindent The terms arising from the large cycles in step 1(b) and from the arbitrary perfect matchings in 2(d) are absorbed into the auxiliary matrix $M$. What is left is a sum of many ``small'' random choices, and so concentration arises from Chernoff-style arguments.

\vspace{2mm}

To prove our concentration result, we will first require the following technical lemma, which assists in bounding the spectral radius of $M$.

\begin{lemma}\label{lem:no-small-cycles-eig} Let $G$ be a graph on $n$ vertices with maximum degree at most $d$ and no cycles of length at most $2\lceil\log_2n\rceil$. Then $\|A_G\|\leq (2\sqrt 2)\sqrt d$. 
\end{lemma}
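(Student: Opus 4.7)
The plan is to apply the trace (moment) method. Since $A_G$ is symmetric, for any positive integer $k$ we have $\|A_G\|^{2k}\leq \operatorname{tr}(A_G^{2k})$, as every eigenvalue of $A_G^{2k}$ is nonnegative. The trace counts closed walks of length $2k$ in $G$. I would take $k=\lceil\log_2 n\rceil$ and leverage the girth hypothesis to bound the number of closed walks from each vertex by the number in a rooted tree of maximum degree $d$.

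The key observation is that every closed walk $v_0,v_1,\ldots,v_{2k}$ in $G$ with $v_0=v_{2k}$ is \emph{tree-like} under the girth assumption. By iteratively cancelling adjacent backtracking pairs $(u,w,u)\mapsto(u)$, one obtains a reduced closed walk of length at most $2k$. A non-trivial reduced closed walk traces an honest closed path, hence contains a cycle of length at most $2k$; since $G$ has no such cycles, the reduction must terminate at the empty walk. Equivalently, the walk lifts to a closed walk based at any chosen preimage of $v$ in the universal cover of $G$, which is a tree of maximum degree at most $d$.

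For counting, a closed walk of length $2k$ in such a tree is determined by (i) a Dyck path $(h_0,h_1,\ldots,h_{2k})$ recording the distance $h_i$ of $v_i$ from $v$ in the tree, and (ii) a choice of child at each up-step $h_i\to h_i+1$; down-steps are forced since each non-root vertex in a tree has a unique parent. There are $C_k=\tfrac1{k+1}\binom{2k}{k}\leq 4^k$ Dyck paths of length $2k$, and each up-step has at most $d$ children to choose from, yielding at most $(4d)^k$ closed walks of length $2k$ from any fixed $v$. Summing over $n$ starting vertices gives $\operatorname{tr}(A_G^{2k})\leq n(4d)^k$.

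Finally, taking $k=\lceil\log_2 n\rceil$, we have $2k\geq 2\log_2 n$, so $n^{1/(2k)}\leq \sqrt 2$, whence
$$\|A_G\|\leq n^{1/(2k)}(4d)^{1/2}\leq \sqrt 2\cdot 2\sqrt d = 2\sqrt 2\,\sqrt d,$$
as desired. The proof is largely mechanical once the trace method is invoked; the only subtle point is verifying that the girth bound $>2k$ really does force every length-$2k$ closed walk to reduce to the empty walk (which in turn justifies the reduction to counting Dyck-style walks in a $d$-ary tree). There is no substantial obstacle beyond this standard unfolding argument.
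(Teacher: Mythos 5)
Your proof is correct and follows essentially the same route as the paper: take $k=\lceil\log_2 n\rceil$, bound $\operatorname{tr}(A_G^{2k})$ by lifting closed walks to the universal cover (valid by the girth hypothesis), count walks in the tree via Dyck paths and child choices to get the bound $n(4d)^k$, then take $2k$-th roots. If anything you are slightly more careful than the paper in bounding $\|A_G\|^{2k}\leq\operatorname{tr}(A_G^{2k})$ directly rather than via $\lambda_1(A_G)$, and in spelling out the backtracking/reduction argument justifying the lift.
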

\begin{proof} Let $r=\lceil \log_2n\rceil$. On one hand, we have
$$\operatorname{tr} A_G^{2r}=\sum_{i=1}^n\lambda_i(A_G)^{2r}\geq \lambda_1(A_G)^{2r}.$$
On the other hand, $\operatorname{tr}A_G^{2r}$ counts the number of closed walks of length $2r$ in $G$. Each such walk is also a walk on the universal cover of $G$, since otherwise would imply the existence of a cycle of $G$ of length at most $2r$. Since every vertex has maximum degree $d$, the number of such walks starting at a given vertex is at most $d^rC_r\leq (4d)^r$, where $C_r$ is a Catalan number. This gives
$$n(4d)^r\geq \lambda_1(A_G)^{2r},$$
and so
$$\lambda_1(A_G)\leq n^{1/(2r)}\sqrt{4d}\leq n^{\frac1{2\log_2 n}}\sqrt{4d}=(2\sqrt2)\sqrt d.\eqno\qedhere$$
\end{proof}

We now state and prove the concentration result for step (1) of the above outline.

\begin{lemma}\label{lem:concentration-special} Let $d$ be an even positive integer. Let $G$ be a bipartite $d$-regular graph on $n$ vertices. There exists a (deterministic) $n\times n$ matrix $M$ satisfying $\|M\|\leq \sqrt{2d}$ and a probability distribution on the set of $d/2$-factors $H$ of $G$ such that, for any vectors $u,v\in\RR^n$ and any real $t\geq 0$,
$$\Pr\left[\left|u^\intercal\left(A_H-\frac 12A_G+M\right)v\right|\geq t\sqrt{dn\lceil\log_2 n\rceil}\|u\|_\infty\|v\|_\infty\right]\leq 2e^{-t^2/2}.$$
\end{lemma}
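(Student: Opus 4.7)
The plan is to execute the construction sketched in step (1) of the outline: greedily decompose $G$ into edge-disjoint cycles, absorb the deterministic contribution of the long cycles into the auxiliary matrix $M$, and extract concentration from the random choices on short cycles via Hoeffding's inequality.

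First I would greedily partition the edges of $G$ into simple cycles, always extracting a shortest remaining cycle. This is possible because $G$ stays Eulerian (all degrees even, since $d$ is even) after each extraction. Setting $r=\lceil\log_2 n\rceil$, let $G_{\text{small}}$ be the union of the extracted cycles of length at most $2r$ and let $G_{\text{large}}$ be the union of the rest. The key observation is that, at the moment the greedy procedure first produces a long cycle, the graph of edges not yet extracted must have girth exceeding $2r$ (otherwise a shorter cycle would have been picked at that step); this graph is exactly $G_{\text{large}}$, so \cref{lem:no-small-cycles-eig} gives $\|A_{G_{\text{large}}}\|\leq 2\sqrt{2d}$.

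Next I would build the random factor. From each small cycle, choose one of its two alternating matchings $E_i^{(1)},E_i^{(2)}$ uniformly at random, independently across cycles, and add it to $H$; from each large cycle, add one alternating matching chosen deterministically. The resulting $H$ is a $(d/2)$-factor because each vertex lies in exactly $d/2$ of the extracted cycles, each contributing one incident edge. Set
$$M=-\bigl(A_{H_{\text{large}}}-\tfrac12 A_{G_{\text{large}}}\bigr)=-\tfrac12\bigl(A_{H_{\text{large}}}-A_{\bar H_{\text{large}}}\bigr),$$
where $\bar H_{\text{large}}$ is the complementary matching in $G_{\text{large}}$. The matrix $A_{H_{\text{large}}}-A_{\bar H_{\text{large}}}$ is a $\pm 1$ signing of $A_{G_{\text{large}}}$, so its entrywise absolute value equals $A_{G_{\text{large}}}$; combining the standard bound $\|B\|\leq\||B|\|$ for real symmetric $B$ with monotonicity of the spectral norm on entrywise-nonnegative symmetric matrices then yields $\|M\|\leq\tfrac12\|A_{G_{\text{large}}}\|\leq\sqrt{2d}$.

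By construction $A_H-\tfrac12 A_G+M=\sum_i X_i$, where the sum is over small cycles $C_i$ of length $2\ell_i$ and $X_i=\pm\tfrac12(A_{E_i^{(1)}}-A_{E_i^{(2)}})$ with independent uniform signs. Hence $u^\intercal(A_H-\tfrac12 A_G+M)v=\sum_i\varepsilon_i a_i$ for i.i.d.\ uniform signs $\varepsilon_i$ and reals $a_i$ satisfying $|a_i|\leq 2\ell_i\|u\|_\infty\|v\|_\infty$ (the $4\ell_i$ nonzero entries of $A_{E_i^{(1)}}-A_{E_i^{(2)}}$ each contribute at most $\|u\|_\infty\|v\|_\infty$). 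Using $\ell_i\leq r$ and $\sum_i 2\ell_i\leq dn/2$, one obtains $\sum_i a_i^2\leq dnr\|u\|_\infty^2\|v\|_\infty^2$, and Hoeffding's inequality delivers the stated tail bound. The hard part is the norm bound on $M$: a plain triangle inequality on $A_{H_{\text{large}}}-\tfrac12 A_{G_{\text{large}}}$ loses a factor of two, so the entrywise-comparison argument---relying on the high girth of $G_{\text{large}}$ guaranteed by the greedy procedure---is essential.
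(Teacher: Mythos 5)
Your proposal is correct and follows essentially the same route as the paper: greedily peel off shortest cycles so that the union of the long cycles has girth $>2\lceil\log_2 n\rceil$, bound its spectral norm via \cref{lem:no-small-cycles-eig}, absorb the deterministic alternating choice on the long cycles into $M$ (using that a $\pm1$-signing cannot increase the spectral norm), and apply Hoeffding to the independent choices on the short cycles. The only cosmetic difference is that the paper extracts all short cycles first and then separately decomposes the remainder into (arbitrary) cycles, while you keep one greedy pass and insist on simple cycles; the resulting $M$, norm bound, and tail estimate are identical.
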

\begin{proof} First, partition the edges $E=E(G)$ of $G$ into sets $E_0$ and $E_1$ as follows: while there is a cycle in $G$ of length at most $2\lceil \log_2n\rceil$, add the edges of this cycle to $E_1$ and remove them from $G$; once no such cycle exists, add all remaining edges to $E_0$. Let $G_0$ and $G_1$ be the graphs on the same vertex set as $G$ with edge sets $E_0$ and $E_1$, respectively. By \cref{lem:no-small-cycles-eig}, $\|A_{G_0}\|\leq 2\sqrt{2d}$. 

\vspace{2mm}

We construct $H$ by partitioning $E_0$ deterministically and $E_1$ randomly. Since $E_1$ is a union of cycles, every vertex has even degree in $G_1$ and thus in $G_0$, and so the edges of $E_0$ can be partitioned into cycles. For each such cycle, pick an arbitrary alternating set of edges of the cycle as in step 1(b) above; let the resulting edges be $E_0'$ and let $H_0$ be the graph with edge set $E_0'$. Each vertex has degree in $H_0$ half of that in $G_0$. Let $M=1/2\cdot A_{G_0}-A_{H_0}$; since $M$ is half the adjacency matrix of a signing of $G_0$, $\|M\|\leq (1/2)\|A_{G_0}\|\leq \sqrt{2d}$. 

\vspace{2mm}

Now, $G_1$ is a union of cycles of length at most $2\lceil \log_2n\rceil$. For each such cycle, choose an alternating set of half the edges randomly from the two possible choices, and add its edges to $E_1'$; let $H_1$ be the graph with edge set $E_1'$. Let $H$ be the graph with edge set $E_0'\cup E_1'$; $H$ is an $a$-factor of $G$, and 
$$A_H-\frac adA_G+M=A_{H_0}+A_{H_1}-\frac12A_{G_0}-\frac12A_{G_1}+\frac12A_{G_0}-A_{H_0}=A_{H_1}-\frac12A_{G_1}.$$
Consider the random variable
$$X=u^\intercal\left(A_{H_1}-\frac12A_{G_1}\right)v.$$
Letting $\mathcal C$ be the set of cycles the union of which is $E_1$,
$$X=\sum_{C\in\mathcal C}X_Cu^\intercal\left(\frac12\tilde A_C\right)v,$$
where $X_C\sim\operatorname{Unif}(\{\pm 1\})$ and $\tilde A_C$ is the adjacency matrix of a signing of $C$ in which incident edges are signed alternately. Since $u^\intercal\tilde A_Cv$ is a sum of $2\operatorname{len}(C)$ terms of the form $\pm u_iv_j$ for integers $i$ and $j$, 
$$\left|u^\intercal \left(\frac 12\tilde A_C\right)v\right|\leq \operatorname{len}(C)\|u\|_\infty\|v\|_\infty.$$
So, the bounded differences inequality implies, for any $\mu\geq 0$,
$$\Pr\left[|X|\geq \mu\right]\leq 2\exp\left(\frac{-2\mu^2}{\sum_{C\in\mathcal C}\left(2\operatorname{len}(C)\|u\|_\infty\|v\|_\infty\right)^2}\right).$$
We have
$$\sum_{C\in\mathcal C}\operatorname{len}(C)^2\leq \left(\max_{C\in\mathcal C}\operatorname{len}(C)\right)\sum_{C\in\mathcal C}\operatorname{len}(C)\leq 2\lceil\log_2n\rceil E(G)=dn\lceil \log_2n\rceil,$$
so
$$\Pr\left[|X|\geq \mu\right]\leq 2\exp\left(\frac{-\mu^2}{2dn\lceil\log_2n\rceil\|u\|_\infty^2\|v\|_\infty^2}\right).$$
This implies the desired concentration upon setting $\mu=t\sqrt{dn\lceil\log_2n\rceil}\|u\|_\infty\|v\|_\infty$. 
\end{proof}

We use this lemma as a black box, as described in step (2) of the outline, to show \cref{prop:concentration-general}.

\begin{proof}[Proof of \cref{prop:concentration-general}] Let $c_d$ be the sequence defined recursively by $c_1=0$, $c_d=c_{d-1}+1$ if $d>1$ is odd, and $c_d=c_{d/2}+\sqrt{2d}$ if $d$ is even. We proceed by strong induction on $d$ to show that there exists a distribution on pairs $(H,M)$ of $a$-factors of $G$ and $n\times n$ matrices wherein $\|M\|\leq c_d$ and, for any $t\geq 0$ and $u,v\in\RR^n$,
$$\Pr\left[\left|u^\intercal\left(A_H-\frac adA_G+M\right)v\right|>t(2+\sqrt 2)\sqrt{dn\lceil\log_2 n\rceil}\|u\|_\infty\|v\|_\infty\right]\leq 2(\log_2 d)e^{-t^2/2}.$$
In the base case of $d=1$, there is not much choice; if $a=0$ then $H$ is empty, while if $a=1$ then $H=G$. In either case, we can take $M=0$, and $u^\intercal(A_H-a/d\cdot A_G+M)v=0$ always. 

\vspace{2mm}

For the inductive step, we first treat the case where $d$ is even and $a\leq d/2$. In this case, we proceed in two steps:
\begin{enumerate}
    \item Construct a $d/2$-factor $G_1$ of $G$ via \cref{lem:concentration-special}, and let $M_1$ be the associated matrix.
    \item Construct an $a$-factor $H$ of $G_1$ via the inductive hypothesis, and let $M_2$ be the associated matrix.
\end{enumerate}
We set $M=M_2+2a/d\cdot M_1$. We have $\|M_1\|\leq \sqrt{2d}$ by \cref{lem:concentration-special} and $\|M_2\|\leq c_{d/2}$ by the inductive hypothesis. This gives
$$\|M\|\leq \|M_2\|+(2a/d)\|M_1\|\leq c_{d/2}+1\cdot \sqrt{2d}=c_d$$
and
$$A_H-\frac adA_G+M=\left(A_H-\frac a{d/2}A_{G_1}+M_2\right)+\frac{2a}d\left( A_{G_1}-\frac 12A_G+M_1\right),$$
as well as the concentration inequalities
\begin{align*}
\Pr\left[\left|u^\intercal\left(A_{G_1}-\frac 12A_G+M_1\right)v\right|\geq t\sqrt{dn\lceil\log_2 n\rceil}\|u\|_\infty\|v\|_\infty\right]&\leq 2e^{-t^2/2}\\
\Pr\left[\begin{matrix}\left|u^\intercal\left(A_H-\frac a{d/2}A_{G_1}+M_2\right)v\right| 
\ \ \ \ \ \ \ \ \ \ \ \ \ \ \ \ \ \ \ \ \\\ \ \ \ \ \ 
\geq t(2+\sqrt 2)\sqrt{(d/2)n\lceil\log_2 n\rceil}\|u\|_\infty\|v\|_\infty\end{matrix}\right]&\leq 2(\log_2(d/2))e^{-t^2/2}.
\end{align*}
This implies by the union bound that the probability that $u^\intercal (A_H-a/d\cdot A_G+M)v$ exceeds
$$t(2+\sqrt 2)\sqrt{(d/2)n\lceil\log_2 n\rceil}\|u\|_\infty\|v\|_\infty+\frac{2a}d\left(t\sqrt{dn\lceil\log_2 n\rceil}\|u\|_\infty\|v\|_\infty\right)$$
in magnitude is at most $2(\log_2d)e^{-t^2/2}$, as this would imply that one of the two above events occurs. Since
$$(2+\sqrt2)\sqrt{d/2}+\frac{2a}d\sqrt d\leq (\sqrt2+1)\sqrt d+\sqrt d=(2+\sqrt2)\sqrt d,$$
this is sufficient.

\vspace{2mm}

In the case where $d$ is even and $a>d/2$, we first construct a pair $(H',M')$ where $H'$ is a $(d-a)$-factor of $G$. Let $H$ be the complement of $H'$ in $G$, and let $M=-M'$. Since
$$A_H-\frac adA_G+M=(A_G-A_{H'})-\frac adA_G-M'=-\left(A_{H'}-\frac{d-a}dA_G+M'\right),$$
the hypothesis for $(d-a)$-factors of $d$-regular graphs suffices. 

\vspace{2mm}

Finally, we treat the case where $d$ is odd. If $a>d/2$ then we can proceed as in the previous paragraph from the $a<d/2$ case, so assume $a<d/2$. We proceed as follows:
\begin{enumerate}
    \item Select an arbitrary perfect matching $H'\subset G$, and let $G_1$ be the complement of $H'$ in $G$, a $(d-1)$-factor of $G$.
    \item Construct an $a$-factor $H$ of $G_1$ via the inductive hypothesis, and let $M_1$ be the associated matrix.
\end{enumerate}
Define $M=\frac{a}dA_{H'}-\frac a{d(d-1)}A_{G_1}+M_1$. We have
\begin{align*}
A_H-\frac adA_G+M
&=A_H-\frac adA_G+\frac adA_{H'}-\frac a{d(d-1)}A_{G_1}+M_1\\
&=A_H-\frac adA_{G_1}-\frac a{d(d-1)}A_{G_1}+M_1\\
&=A_H-\frac a{d-1}A_{G_1}+M_1,
\end{align*}
as well as
$$\|M\|\leq \frac ad\|A_{H'}\|+\frac a{d(d-1)}\|A_{G_1}\|+\|M_1\|\leq \frac ad+\frac ad+c_{d-1}\leq c_{d-1}+1$$
so the inductive hypothesis at $d-1$ suffices.

\vspace{2mm}

All that remains is to show that $c_d\leq 6\sqrt d$ for all $d$. We can show
\begin{align*}
c_d
&\leq (2+\sqrt 2)(\sqrt{2d}-1)+(\#\text{ of ones in the binary representation of }d)\\
&\leq (2+\sqrt 2)(\sqrt{2d}-1)+\log_2d
\end{align*}
by induction on $d$, so the result follows from
$$\log_2d-(2+\sqrt 2)\leq (4-2\sqrt 2)\sqrt d,$$
which is easy to check.
\end{proof}

We conclude this section with a corollary of \cref{prop:concentration-general} which shows a concentration result for factors of regular bipartite graphs on subspaces with an orthonormal basis with small $L^\infty$ norm. This is what we will use when constructing our multiplicity-incrementing lifts.

\begin{corollary}\label{cor:factor-selection} Let $a$ and $d$ be positive integers with $0\leq a\leq d$. Let $G$ be a bipartite $d$-regular graph on $n$ vertices, and let $1\leq K,m\leq n$. Let $S$ be a finite set containing $V(G)$, and let $U\subset\RR^S$ be a subspace of dimension at most $m$ spanned by pairwise orthogonal unit vectors $u_1,u_2,\dots,u_{m'}$ satisfying $\|u_i\|_\infty\leq\sqrt{K/n}$ for each $1\leq i\leq m'$. If $m\geq\max(2\log_2d,15)$ and $n\geq K^3m^6$, then there exists an $a$-regular subgraph $H\subset G$ on $V(G)$ satisfying
$$\left|v^\intercal\left(A_H-\frac adA_G\right)v\right|\leq 7\sqrt d$$
for every unit vector $v\in U$, where $A_H$ and $A_G$ are extended by zeros to a matrix in $\RR^{S\times S}$.
\end{corollary}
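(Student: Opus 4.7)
The plan is to apply \cref{prop:concentration-general} to produce a random pair $(H, M)$, then use a union bound over pairs of basis vectors of $U$ to control the quadratic form of $A_H - (a/d) A_G$ on $U$. Writing $T = A_H - (a/d) A_G + M$, the bound $\|M\| \le 6\sqrt d$ immediately gives $|v^\intercal M v| \le 6\sqrt d$ for any unit vector $v$, so by the triangle inequality it suffices to find $(H, M)$ in the support with $|v^\intercal T v| \le \sqrt d$ for every unit $v \in U$; the required bound $|v^\intercal(A_H - (a/d)A_G)v| \le 7\sqrt d$ follows immediately.

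To make this uniform bound tractable, I reduce to bilinear forms on the given orthonormal basis. Expanding a unit $v = \sum_{i=1}^{m'} c_i u_i$ with $\sum_i c_i^2 = 1$ and applying Cauchy--Schwarz gives
$$|v^\intercal T v| = \left|\sum_{i,j} c_i c_j\, u_i^\intercal T u_j\right| \le \left(\sum_i |c_i|\right)^{\!2} \max_{i,j} |u_i^\intercal T u_j| \le m \max_{i,j} |u_i^\intercal T u_j|,$$
so I only need to arrange $|u_i^\intercal T u_j| \le \sqrt d / m$ for all $1 \le i, j \le m'$ simultaneously. Restricting each $u_i$ to $V(G)$ can only shrink its $\infty$-norm, so the hypothesis $\|u_i\|_\infty \le \sqrt{K/n}$ transfers directly when I invoke \cref{prop:concentration-general} on each pair $(u_i, u_j)$. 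With a parameter $t$, the probability that $|u_i^\intercal T u_j|$ exceeds $t(2+\sqrt 2) K\sqrt{d\lceil \log_2 n\rceil/n}$ is at most $2(\log_2 d) e^{-t^2/2}$. Because the hypothesis $m \ge 2\log_2 d$ makes $\log(2 m^2 \log_2 d)$ of order $\log m$, I can pick $t = \Theta(\sqrt{\log m})$ for which each per-pair failure probability is below $1/m^2$; a union bound over the at most $m^2$ pairs then yields a positive-probability event on which every $|u_i^\intercal T u_j|$ falls below the threshold, and in particular some pair $(H, M)$ in the support realizes this.

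The one remaining task is to verify that the threshold $t(2+\sqrt 2) K\sqrt{d\lceil\log_2 n\rceil/n}$ is itself at most $\sqrt d/m$, i.e.\ that $t^2(2+\sqrt 2)^2 K^2 m^2 \lceil \log_2 n\rceil \le n$. With $t^2 = O(\log m)$, this reduces to an inequality of the form $n \ge C K^2 m^2 (\log m)(\log n)$ for an absolute constant $C$, which the hypothesis $n \ge K^3 m^6$ (combined with $m \ge 15$) implies comfortably. I expect this numerical bookkeeping --- tracking the factors $(2+\sqrt 2)^2$, $\log_2 d$, and $\lceil \log_2 n\rceil$ against the regime $n \ge K^3 m^6$ --- to be the only delicate point; the overall architecture (random $(H,M)$, reduction to the basis, union bound, threshold verification) is entirely routine.
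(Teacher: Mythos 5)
Your proposal is correct and follows the paper's own proof essentially step for step: both invoke \cref{prop:concentration-general}, choose $t$ of order $\sqrt{\log m}$ so that the per-pair failure probability drops below $m^{-2}$ (using $m\geq 2\log_2 d$), union-bound over the $\leq m'^2$ basis pairs, then pass from the bilinear forms $u_i^\intercal T u_j$ to arbitrary unit $v\in U$ via Cauchy--Schwarz on $(\sum|c_i|)^2\leq m$, and finally check that the resulting threshold is at most $\sqrt d$ using $n/\lceil\log_2 n\rceil\geq n^{2/3}$ together with $n\geq K^3m^6$ and $m\geq 15$. The only difference is cosmetic bookkeeping of constants; the architecture is identical.
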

\begin{proof} For each $1\leq i\leq m'$, let $\overline{u_i}$ be the restriction of $u_i$ to $\RR^{V(G)}$; note that $\|\overline{u_i}\|_\infty\leq\|u_i\|_\infty$. Set $t=\sqrt{6.5\ln m}$, so that
$$2(\log_2d)e^{-t^2/2}\leq me^{-3.25\ln m}<m^{-2}.$$
By \cref{prop:concentration-general} and the union bound, there exists an $a$-regular subgraph $H\subset G$ on $V(G)$ and an $n\times n$ matrix $M$ satisfying $\|M\|\leq 4\sqrt d$ for which
\begin{align*}
\left|u_i^\intercal\left(A_H-\frac adA_G+M\right)u_j\right|
&=\left|\overline{u_i}^\intercal\left(A_H-\frac adA_G+M\right)\overline{u_j}\right|\\
&\leq t(2+\sqrt 2)\sqrt{dn\lceil \log_2 n\rceil}\|u_i\|_\infty\|u_j\|_\infty\\
&<9K\sqrt{\frac{d\ln m\lceil \log_2 n\rceil}n}
\end{align*}
for all $1\leq i,j\leq m'$. Now, take any unit vector $v\in U$. Since $\{u_i\}_i$ is an orthonormal basis of $U$, we can write $v=\sum_{i=1}^{m'} c_iu_i$ for some real $c_1,\dots,c_{m'}$ with $\sum c_i^2=1$. This gives
\begin{align*}
\left|v^\intercal\left(A_H-\frac adA_G+M\right)v\right|
&\leq \sum_{i=1}^{m'}\sum_{j=1}^{m'}|c_i||c_j|\left|u_i^\intercal\left(A_H-\frac adA_G+M\right)u_j\right|\\
&<9K(|c_1|+\cdots+|c_{m'}|)^2\sqrt{\frac{d\ln m\lceil \log_2 n\rceil}n}\\
&\leq 9Km\sqrt{\frac{d\ln m\lceil \log_2 n\rceil}n},
\end{align*}
where we have used the Cauchy--Schwarz inequality. Since $n\geq 2$, $n/\lceil\log_2n\rceil\geq n^{2/3}$, and since $m\geq 15$, $9\sqrt{\ln m}\leq m$, so this bound gives
$$\left|v^\intercal\left(A_H-\frac adA_G+M\right)v\right|\leq Km^2n^{-1/3}\sqrt d\leq \sqrt d$$
by our bound on $n$. The fact that $\|M\|\leq 6\sqrt d$ finishes the proof.
\end{proof}

\section{Integer second eigenvalue}\label{sec:mult-int}

In this section, we use \cref{cor:factor-selection} to construct fixed-degree regular graphs with a fixed second eigenvalue of multiplicity $\Omega(\log\log n)$; the fixed eigenvalue can be any sufficiently large integer. The main result of this section will be the following ``incrementing'' proposition. Iterating this will allow us to obtain \cref{thm:large-mult-nice}(a).

\begin{proposition}\label{prop:big-aug-step} Let $d\geq 22000$, and let $a=\lceil 12\sqrt d\rceil$. Suppose $G$ is a bipartite $d$-regular graph on $n$ vertices for which $\lambda_2(A_G)\leq d-2a$, and let $\ell$ be the multiplicity of $d-2a$ as an eigenvalue of $A_G$. Then there exists a bipartite $d$-regular graph $\tilde G$ on at most $4n^9$ vertices with second eigenvalue $d-2a$ of multiplicity at least $\ell+1$. 
\end{proposition}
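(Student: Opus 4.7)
Following the outline of \cref{sec:overview}, the plan is to apply Ramanujan $2$-lifts from \cref{cor:ramanujan} to $G$ until the unit eigenvectors corresponding to eigenvalues at least $d-2a$ have sufficiently small $L^\infty$ norm, then apply \cref{cor:factor-selection} to select an $a$-factor $H$ of the resulting graph. The desired $\tilde G$ will be the $2$-lift determined by the signing negating $E(H)$; a new $(d-2a)$-eigenvector of $A_{\tilde G}$ comes from the all-ones vector on one copy of $V(G')$ and its negation on the other.

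Concretely, set $r:=\lceil 8\log_2 n\rceil$ and apply \cref{cor:ramanujan} $r$ times to $G$, obtaining a bipartite $d$-regular graph $G'$ on $n'=2^r n\leq 2n^9$ vertices. By iteration of \cref{cor:ramanujan}, the eigenspaces of $A_{G'}$ corresponding to $|\mu|>2\sqrt{d-1}$ are exactly the pulled-back eigenspaces of $A_G$; since $d-2a>2\sqrt{d-1}$ when $d\geq 22000$ and $a=\lceil 12\sqrt d\rceil$, the subspace $U$ spanned by eigenvectors of $A_{G'}$ with eigenvalue at least $d-2a$ has dimension $\ell+1$, with an orthonormal basis of the form $2^{-r/2}(x_i\oplus x_i\oplus\cdots\oplus x_i)$. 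Each basis vector has $L^\infty$ norm at most $2^{-r/2}=\sqrt{n/n'}$, so \cref{cor:factor-selection} (applied with $K=n$ and $m=\max(\ell+1,15,2\lceil\log_2 d\rceil)\leq n$, noting $n'\geq n^9\geq K^3m^6$) yields an $a$-factor $H\subseteq G'$ satisfying $|v^\intercal(A_H-(a/d)A_{G'})v|\leq 7\sqrt d$ for every unit $v\in U$. Let $\tilde G$ be the $2$-lift of $G'$ determined by $s(e)=-1$ iff $e\in E(H)$; then $A_s=A_{G'}-2A_H$ and $A_s\mathbf{1}=(d-2a)\mathbf{1}$, giving $A_{\tilde G}$ one additional $(d-2a)$-eigenvector by observation (1) on $2$-lifts.

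The main obstacle is showing $\|A_s\|\leq d-2a$; since $A_s$ signs a bipartite graph, its spectrum is symmetric about $0$, so this is equivalent to $\lambda_1(A_s)\leq d-2a$. For any unit $v\in\RR^{V(G')}$, write $v=\alpha\mathbf{1}/\sqrt{n'}+\beta\hat u_0+\gamma\hat w$ with $\hat u_0$ unit in the $(d-2a)$-eigenspace $U_0$ of $A_{G'}$ and $\hat w$ unit in $U^\perp$, so $\alpha^2+\beta^2+\gamma^2=1$. Using that $\mathbf{1}$ is an eigenvector of both $A_{G'}$ and $A_H$ (so the $\alpha$-cross-terms vanish) and that $\hat u_0^\intercal A_{G'}\hat w=0$ by the $A_{G'}$-invariance of $U$ and $U^\perp$, one computes
\[
v^\intercal A_s v=\alpha^2(d-2a)+\beta^2\left(\tfrac{(d-2a)^2}{d}-2\hat u_0^\intercal E\hat u_0\right)+\gamma^2\hat w^\intercal A_s\hat w-4\beta\gamma\,\hat u_0^\intercal A_H\hat w,
\]
where $E:=A_H-(a/d)A_{G'}$. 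The inputs are $|\hat u_0^\intercal E\hat u_0|\leq 7\sqrt d$ from \cref{cor:factor-selection}; $\hat w^\intercal A_s\hat w\leq 2\sqrt{d-1}+2a$, using that after the Ramanujan lifts the only eigenvalues of $A_{G'}$ strictly above $2\sqrt{d-1}$ are $d$ and the $\ell$ copies of $d-2a$ (all in $U$) together with $\|A_H\|\leq a$; and $|\hat u_0^\intercal A_H\hat w|\leq\|A_H\|\leq a$. Collecting,
\[
(d-2a)-v^\intercal A_s v\geq P\beta^2+Q'\gamma^2-4a|\beta\gamma|,
\]
where $P:=\frac{2a(d-2a)}{d}-14\sqrt d$ and $Q':=d-4a-2\sqrt{d-1}$. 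A direct numerical check confirms (with only a slim margin) that $P,Q'>0$ and $PQ'\geq 4a^2$ for $d\geq 22000$ and $a=\lceil 12\sqrt d\rceil$, so the right-hand side is a PSD quadratic form in $(|\beta|,|\gamma|)$.

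With $\|A_s\|\leq d-2a$ in hand, the spectrum of $A_{\tilde G}$ is the multiset union of those of $A_{G'}$ and $A_s$: $A_{G'}$ contributes $d$ once and $d-2a$ with multiplicity $\ell$, while $A_s$ contributes at least one more copy of $d-2a$, yielding multiplicity $\geq\ell+1$ for $d-2a$ in $A_{\tilde G}$ with no eigenvalue strictly between $d-2a$ and $d$. The graph $\tilde G$ is bipartite and $d$-regular by observations (2) and (3) on $2$-lifts, and $|V(\tilde G)|=2n'\leq 4n^9$, as required.
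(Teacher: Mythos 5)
Your computation correctly reduces the problem to showing a quadratic form in $(|\beta|,|\gamma|)$ is PSD, and the parameter bookkeeping ($r=\lceil 8\log_2 n\rceil$, $K=n$, $m\geq\max(\ell+1,15,2\lceil\log_2 d\rceil)$) is fine. However, there is a genuine gap in the step bounding $\hat w^\intercal A_s\hat w$. You assert that ``after the Ramanujan lifts the only eigenvalues of $A_{G'}$ strictly above $2\sqrt{d-1}$ are $d$ and the $\ell$ copies of $d-2a$,'' but nothing in the hypotheses of the proposition forbids $G$ from having eigenvalues in the open interval $(2\sqrt{d-1},\,d-2a)$, and \cref{cor:ramanujan} preserves all eigenvalues exceeding $2\sqrt{d-1}$, so $G'$ would inherit them. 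Indeed, such eigenvalues necessarily appear in the paper's iterative construction (the proof of \cref{thm:large-mult-nice}(a)): the multiplicity-incrementing step only controls $\lambda_1(A_s)$, so the 2-lift $\tilde G$ can have eigenvalues anywhere in $[-(d-2a),d-2a)$, and some of these will generally land in $(2\sqrt{d-1},d-2a)$. Since your $U$ is spanned only by eigenvectors with eigenvalue $\geq d-2a$ (dimension $\ell+1$), such ``intermediate'' eigenvectors fall into $U^\perp$, so $\hat w^\intercal A_{G'}\hat w$ can be as large as $d-2a$ rather than $2\sqrt{d-1}$, which destroys the claimed bound $\hat w^\intercal A_s\hat w\leq 2\sqrt{d-1}+2a$ and hence the PSD check.

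The fix is to enlarge $U$ to span all eigenvectors of $A_{G'}$ with eigenvalue exceeding $2\sqrt{d-1}$, which is exactly what the paper does in \cref{lem:aug-step}. Since after the lifts every such eigenvector is pulled back from $A_G$, there are at most $n$ of them with $L^\infty$ norm at most $\sqrt{n/n'}$, so \cref{cor:factor-selection} still applies with $K=m=n$ and the same number of lifts. The quadratic form analysis changes only in that $\hat u_0^\intercal A_{G'}\hat u_0$ is no longer exactly $d-2a$; but the hypothesis $\lambda_2(A_G)\leq d-2a$ gives $\hat u_0^\intercal A_{G'}\hat u_0\leq d-2a$ for any unit $\hat u_0\in U$ orthogonal to $\mathbf 1$, which is all you need to recover $\hat u_0^\intercal A_s\hat u_0\leq (d-2a)^2/d+14\sqrt d$. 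With that repair, the rest of your argument goes through (I checked the numerical claim: for $d=22000$, $a=1780$, one gets $P\approx 907$, $Q'\approx 14583$, $PQ'-4a^2\approx 5.6\times 10^5>0$, and the margin grows with $d$); the resulting proof then coincides in substance with the paper's route through \cref{lem:aug-step}.
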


The process will be, as described at the beginning of \cref{sec:overview}, to first perform many lifts which add no new large eigenvalue, and then to apply a multiplicity-incrementing lift determined by a random $a$-factor of $G$ using \cref{cor:factor-selection}. The properties of this lift are described in the following lemma.

\begin{lemma}\label{lem:aug-step} Let $d\geq 22000$, and let $a=\lceil 12\sqrt d\rceil$. Let $G$ be a $d$-regular bipartite graph with $n$ vertices satisfying $\lambda_2(A_G)\leq d-2a$. Suppose that there are at most $m$ eigenvalues of $G$ greater than $2\sqrt{d-1}$, and $A_G$ possesses an orthonormal eigenbasis in which each component of these $m'\leq m$ top eigenvectors is at most $\sqrt{m/n}$ in magnitude. Then, as long as $m\geq 2\log_2d$ and $n>m^9$, there exists an $a$-factor $H$ of $G$ with $\lambda_1(A_G-2A_H)=d-2a$.
\end{lemma}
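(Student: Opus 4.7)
My plan is to apply \cref{cor:factor-selection} to a subspace $U \subset \RR^n$ covering all of the ``large'' eigenspaces of $A_G$, then to bound the spectrum of $A_s := A_G - 2A_H$ by reducing to the subspace orthogonal to the eigenvectors $\mathbf{1}$ and $\sigma\mathbf{1}$, where $\sigma$ is the diagonal bipartition sign-flip (anticommuting with $A_G$). Let $W_+$ be the span of the $m'$ top eigenvectors of $A_G$; by hypothesis it admits an orthonormal basis $\{u_1, \dots, u_{m'}\}$ with $\|u_i\|_\infty \leq \sqrt{m/n}$. By bipartite symmetry, $W_- := \sigma W_+$ is the span of the $m'$ eigenvectors of $A_G$ with eigenvalue $< -2\sqrt{d-1}$, with orthonormal basis $\{\sigma u_i\}$ of the same $L^\infty$ bound. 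Setting $U := W_+ \oplus W_-$, the subspace has dimension at most $2m$ and an orthonormal basis with $L^\infty \leq \sqrt{m/n}$, and it contains both $\mathbf{1}$ and $\sigma\mathbf{1}$. Applying \cref{cor:factor-selection} to $U$ with $K = m$ and dimension bound $2m$ (the doubling inflates the final constant by at most a bounded multiplicative factor, which the hypothesis $n > m^9$ absorbs), I obtain an $a$-factor $H$ satisfying $|v^\intercal E v| \leq C\sqrt{d}$ for every unit $v \in U$, where $E := A_H - (a/d)A_G$ and $C$ is an absolute constant (at most $10$).

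Next, since $A_H \mathbf{1} = a\mathbf{1}$ and $A_H\sigma\mathbf{1} = -a\sigma\mathbf{1}$, we have $A_s\mathbf{1} = (d-2a)\mathbf{1}$ and $A_s\sigma\mathbf{1} = -(d-2a)\sigma\mathbf{1}$, so $\pm(d-2a)$ are eigenvalues of $A_s$ and $W_0 := \mathrm{span}(\mathbf{1}, \sigma\mathbf{1})$ is $A_s$-invariant. Because $A_s$ is a signing of a bipartite graph, $\|A_s\| = \lambda_1(A_s)$, so the conclusion $\lambda_1(A_s) = d-2a$ reduces to showing $\|A_s|_{W_0^\perp}\| \leq d - 2a$. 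For a unit $v \perp \mathbf{1},\sigma\mathbf{1}$, the assumption $\lambda_2(A_G) \leq d-2a$ together with bipartite symmetry ($\lambda_{n-1}(A_G) = -\lambda_2(A_G)$) yields $|v^\intercal A_G v| \leq d - 2a$, and rewriting $A_s = (1-2a/d)A_G - 2E$ gives $|v^\intercal A_s v| \leq (1-2a/d)(d-2a) + 2|v^\intercal E v|$. This is at most $d - 2a$ exactly when $|v^\intercal E v| \leq a - 2a^2/d$; for $a = \lceil 12\sqrt{d}\rceil$ and $d \geq 22000$ the right side exceeds $12\sqrt{d} - 288 > C\sqrt{d}$, so the bound holds whenever $v \in U$.

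The core difficulty is extending $|v^\intercal E v| \leq a - 2a^2/d$ to unit $v \in W_0^\perp$ with a nontrivial component in $U^\perp$. My approach is to decompose $v = v_U + v_\perp$ with $v_U \in U \cap W_0^\perp$ and $v_\perp \in U^\perp \subseteq W_0^\perp$ (since $W_0 \subset U$), and to handle the three resulting quadratic pieces: $|v_U^\intercal E v_U|$ is furnished directly by \cref{cor:factor-selection}; $|v_\perp^\intercal E v_\perp|$ is controlled via $\|A_G|_{U^\perp}\| \leq 2\sqrt{d-1}$ (making the $(a/d)A_G$ contribution negligible) together with the orthogonality of $v_\perp$ to the top and bottom eigenvectors $\mathbf{1}, \sigma\mathbf{1}$ of $A_H$; and the cross term, which equals $v_U^\intercal A_H v_\perp$ by $A_G$-invariance of $U$ and $U^\perp$, is the delicate one, which I would bound either via polarization applied on a slightly enlarged subspace or via the identity $\|P_U A_H P_{U^\perp}\|^2 = \|P_U A_H^2 P_U - (P_U A_H P_U)^2\|$ combined with \cref{cor:factor-selection}'s control on $\|P_U A_H P_U\|$. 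The comfortable slack $2a - 4a^2/d \approx 24\sqrt{d}$ against the $O(\sqrt{d})$ error terms, guaranteed by $d \geq 22000$, is what allows the argument to close.
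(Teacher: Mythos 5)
Your setup is reasonable, but the central reduction has a genuine flaw: the target you state, namely $|v^\intercal E v|\le a-2a^2/d$ for \emph{every} unit $v\in W_0^\perp$ (where $E=A_H-\tfrac{a}{d}A_G$), is not achievable. The problem is exactly the $v_\perp^\intercal E v_\perp$ piece. You claim it is ``controlled via $\|A_G|_{U^\perp}\|\le2\sqrt{d-1}$ together with the orthogonality of $v_\perp$ to $\mathbf 1,\sigma\mathbf 1$,'' but orthogonality to $\mathbf 1$ and $\sigma\mathbf 1$ gives no quantitative improvement over $|v_\perp^\intercal A_Hv_\perp|\le a\|v_\perp\|^2$: the distribution of \cref{cor:factor-selection} imposes no spectral gap on $A_H$ (indeed $H$ may even be disconnected, in which case $\lambda_2(A_H)=a$), so $|v_\perp^\intercal E v_\perp|$ can be as large as roughly $(a+24)\|v_\perp\|^2$, which already exceeds $a-2a^2/d\approx a-288$ when $\|v_\perp\|$ is near $1$. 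The step that actually causes the problem is upstream: you replaced $|v^\intercal A_Gv|$ by the uniform bound $d-2a$, which is only tight when $v$ lies in $U\cap W_0^\perp$, and then tried to push the entire remaining burden onto $E$.

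The fix — and what the paper does — is to never uniformize $|v^\intercal A_Gv|$. Writing $w=x+y$ with $x\in U$ and $y\in U'$ (the span of eigenvectors with eigenvalue $\le 2\sqrt{d-1}$), one keeps $x^\intercal A_Gx$ and $y^\intercal A_Gy$ as separate quantities: the factor $(1-2a/d)$ is applied only to the $x$-part, while the $y$-part of $A_G$ contributes only $2\sqrt{d-1}\|y\|^2$, which is tiny compared to $d-2a$. The $A_H$ contributions (both the $y^\intercal A_Hy$ term and the cross term) are then bounded by the \emph{trivial} estimates $|y^\intercal A_Hy|\le a\|y\|^2$ and $|x^\intercal A_Hy|\le a\|x\|\|y\|$ — no polarization or norm identity is needed — and the whole thing reduces to checking that a $2\times2$ quadratic form in $(\|x\|,\|y\|)$ has top eigenvalue at most $d-2a$, which is where $d\ge 22000$ is used. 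The reason this works while your approach cannot is that, when $\|y\|\approx1$, the ``loss'' from $A_H$ (roughly $2a\approx24\sqrt d$) is compared against the ``gain'' of $d-2a$ minus $2\sqrt{d-1}$ (roughly $d$), not against $a-2a^2/d$. Separately, a minor issue: taking $U=W_+\oplus W_-$ makes $\dim U\le2m$, so \cref{cor:factor-selection} would require $n\gtrsim 64m^9$, not the $n>m^9$ in the hypothesis; the paper avoids this by using only the eigenvectors with eigenvalue in $(2\sqrt{d-1},d)$ (dimension at most $m-1$) and handling the negative spectrum via $\|A_H\|=a$ rather than concentration.
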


\noindent We first prove the proposition given this lemma, and then prove the lemma.

\begin{proof}[Proof of \cref{prop:big-aug-step}] Let $t$ be a positive integer. By applying  \cref{cor:ramanujan} $t$ times, we can find a $d$-regular bipartite graph $G'$ on $n':=n2^t$ vertices for which, for each eigenvalue $\mu$ of $A_{G'}$ with $|\mu|\geq 2\sqrt{d-1}$, the $\mu$-eigenspace of $A_{G'}$ is exactly
$$\left\{\underbrace{x\oplus x\oplus \cdots\oplus x}_{2^t\text{ copies of }x}\colon A_Gx=\mu x\right\}.$$
In particular, $$\lambda_2(A_{G'})\leq\max(\lambda_2(A_G),2\sqrt{d-1})\leq d-2a,$$
and $A_{G'}$ has eigenvalue $d-2a$ with multiplicity $\ell$. Moreover, $A_G'$ possesses at most $|V(G)|=n$ eigenvalues greater than $2\sqrt{d-1}$, and an orthonormal eigenbasis of $A_{G'}$ containing
$$\left\{\frac1{2^{t/2}}(x\oplus \cdots\oplus x)\colon x\text{ is in an orthonormal eigenbasis of }A_G\right\}$$
possesses no eigenvector with an eigenvalue larger than $2\sqrt{d-1}$ and an entry larger than $2^{-t/2}=\sqrt{n/n'}$ in magnitude. So, as long as $n\geq 2\log_2d$ (which holds since there exists a $d$-regular graph $G$ on $n$ vertices) and $n2^t>n^9$, we can apply \cref{lem:aug-step} to find an $a$-factor $H$ of $G'$ with $\lambda_1(A_{G'}-2A_H)=d-2a$. Letting $s$ be the associated signing of $G'$ and $\tilde G$ be the associated $2$-lift, the spectrum of $\tilde G$ contains the eigenvalue $d-2a$ with multiplicity at least $\ell+1$. Selecting $t$ to be the smallest positive integer satisfying $2^t>n^8$ finishes the proof.
\end{proof}

\begin{proof}[Proof of \cref{lem:aug-step}] Let $U$ be the span of all eigenvectors of $A_G$ with eigenvalue exceeding $2\sqrt{d-1}$ and strictly less than $d$; note that $\dim U\leq m'\leq m$, and $U$ possesses an orthonormal basis in which each vector has $L^\infty$ norm at most $\sqrt{m/n}$. So, by \cref{cor:factor-selection} (which we can apply since $2\log_2d>15$), there exists an $a$-factor $H$ of $G$ with
$$\left|v^\intercal\left(A_H-\frac adA_G\right)v\right|\leq 7\sqrt d$$
for every unit vector $v\in U$. In particular,
\begin{equation}\label{eq:U-bound}\tag{$\star$}
v^\intercal A_Hv\geq \frac adv^\intercal A_Gv-7\sqrt d\|v\|^2\ \ \ \ \ \ \text{for all }v\in U.
\end{equation}

\vspace{2mm}

Now, since $H$ is $a$-regular, the matrix $A_G-2A_H$ has eigenvalue $d-2a$ with the all-ones vector as an eigenvalue. We need to show that there are no larger eigenvalues. To this end, consider any vector $w$ orthogonal to the all-ones vector; it suffices to show $w^\intercal A_Hw\leq d-2a$. Write $w=x+y$, where $x\in U$ and $y\in U'$, the span of all eigenvectors of $A_G$ with eigenvalue at most $2\sqrt{d-1}$. We compute
\begin{align*}
w^\intercal A_Hw
&=x^\intercal A_Hx+2x^\intercal A_Hy+2y^\intercal A_Hy\\
&\geq \frac adx^\intercal A_Gx-(7\sqrt d)\|x\|^2-2\|x\|(ay)-a\|y\|^2
\end{align*}
where we have used \eqref{eq:U-bound} and the fact that $\|A_H\|=a$. Now, this allows us to bound
\begin{align*}
w^\intercal (A_G-2A_H)w
&=x^\intercal A_Gx+y^\intercal A_Gy-2w^\intercal A_Hw\\
&\leq x^\intercal A_Gx+(2\sqrt{d-1})\|y\|^2-\frac{2a}dx^\intercal A_Gx\\
&\ \ \ \ \ \ \ \ \ \ \ \ \ \ \ \ \ \ \ \ \ \ \ \ \ \ \ +(14\sqrt d)\|x\|^2+2a\|y\|(2\|x\|+\|y\|)\\
&=\left(1-\frac{2a}d\right)x^\intercal A_Gx+(14\sqrt d)\|x\|^2+4a\|x\|\|y\|+(2a+2\sqrt{d-1})\|y\|^2\\
&\leq \left((d-2a)\left(1-\frac{2a}d\right)+14\sqrt d\right)\|x\|^2+4a\|x\|\|y\|+(2a+2\sqrt{d-1})\|y\|^2.
\end{align*}
This is a quadratic form in $\|x\|$ and $\|y\|$. The maximum of a quadratic form $Rx^2+2Sxy+Ty^2$ over the unit circle can be easily computed to be $(R+T+\sqrt{(R-T)^2+4S^2})/2$. Since $\|x\|^2+\|y\|^2=\|w\|^2=1$, all that remains is to show that this is at most $d-2a$ for the quadratic form above in $\|x\|$ and $\|y\|$ as long as $d\geq 22000$. This is a simple computation. 
\end{proof}

We conclude this section by establishing \cref{thm:large-mult-nice}(a).

\begin{proof}[Proof of \cref{thm:large-mult-nice}(a)] Let $\ell\geq 20000$ be a positive integer. We will show that, for infinitely many $n$, there exist bounded-degree regular graphs on $n$ vertices with second eigenvalue exactly $\ell$, of multiplicity $\Omega(\log\log n)$.

\vspace{2mm}

Let $d$ be a positive integer exceeding $22000$ for which $d-2\lceil 12\sqrt d\rceil=\ell$ and let $a=\lceil 12\sqrt d\rceil$, so that $\ell=d-2a$. Set $G_0=K_{d,d}$ and $n_0=2d$, and define a sequence $(n_i)$ recursively by $n_{i+1}=4n_i^9$. Note that $\lambda_2(G_0)=0<d-2a$. Repeated applications of \cref{prop:big-aug-step} show that, for each $i\geq 1$, there exists a bipartite $d$-regular graph $G_i$ such that
\begin{enumerate}[(i)]
    \item $G_i$ has at most $n_i$ vertices, and
    \item $d-2a$ is the second eigenvalue of $G_i$ and has multiplicity at least $i$.
\end{enumerate}
The fact that $n_i=2^{2^{\Theta(i)}}$ finishes the proof, since $i=\Theta(\log\log n_i)$.
\end{proof} 

\begin{remark}\label{rmk:factor-improvement} If we were able to find, for any bipartite $d$-regular graph $G$, an $a$-factor $H$ with
$$\left\|A_H-\frac adA_G\right\|=o(a),$$
this would allow us to remove the Ramanujan lift step and attain second eigenvalue multiplicity on the order of $\log n$. This aim is reminiscent of a result of Bilu and Linial \cite[Theorem 3.1]{BiluLinial} that there exists a signing $s$ of such a graph with $\|A_s\|=O(d^{1/2}\log^{3/2}d)$, but in our distribution there seems not to be enough randomness to follow their spectral radius-bounding framework.
\end{remark}

\section{Non-integer second eigenvalue}\label{sec:mult-gen}

In this section we give a generalization of the framework from the previous section, which we will use to prove our main result. We begin by elaborating on the sketch of the $k$-partite graphs-based strategy given in \cref{sec:overview}. To do this, we first need some terminology.

\begin{definition}\label{def:sign-matrix} Recall that a $k\times k$ symmetric matrix $M$ is \emph{irreducible} if the (not necessarily simple) graph with adjacency matrix $M$ is connected. (Irreducibility ensures that we can apply the Perron--Frobenius theorem to $M$.) Given a symmetric $k\times k$ irreducible matrix $M$ with zeros on the diagonal and nonnegative integer entries:
\begin{itemize}
    \item A \emph{graph lift} of $M$ is a lift of the non-simple graph with adjacency matrix $M$. In other words, it is a $k$-partite graph $G$ on vertex set $V_1\sqcup V_2\sqcup\cdots\sqcup V_k$ such that, for each $1\leq i,j\leq k$, $G[V_i\sqcup V_j]$ is a $M_{ij}$-regular bipartite graph.
    
    \item A \emph{sign matrix} of $M$ is a symmetric integer matrix $M'$ for which $M'_{ij}\equiv M_{ij}\pmod 2$ and $|M'_{ij}|\leq M_{ij}$ for each $1\leq i,j\leq k$.
    
    \item An \emph{$M'$-signing} of a graph lift $G$ of $M$ is an assignment of an element of $\{\pm 1\}$ to each edge of $G$ such that, for each $1\leq i,j\leq k$, the labels of the edges of $G[V_i\sqcup V_j]$ incident to each vertex of $V_i\sqcup V_j$ sum to $M'_{ij}$.
\end{itemize}
\end{definition}

\noindent For example, take
$$M=\begin{pmatrix}0&d\\d&0\end{pmatrix},\ M'=\begin{pmatrix}0&d-2a\\d-2a&0\end{pmatrix}$$
for $a=\lceil 12\sqrt d\rceil$. Then $M'$ is a sign matrix of $M$, and the graph lifts of $M$ are exactly the $d$-regular bipartite graphs. \cref{lem:aug-step} demonstrates the existence of an $M'$-signing of such a graph with top eigenvalue exactly $d-2a$, under certain conditions. We now prove the following lemma, which gives information about the eigendata of graph lifts and matrix-signings.

\begin{lemma}\label{lem:signing-spectrum} Let $(M,M',G)$ be as in \cref{def:sign-matrix}.
\begin{enumerate}[(a)]
    \item The top eigenvalue of $A_G$ is $\lambda_1(M)$; if $v$ is a top eigenvector of $M$, then the vector which assigns each vertex in $V_i$ the value $v_i$ is an eigenvector of $A_G$ with this eigenvalue.
    \item Consider an $M'$-signing $s:E(G)\to\{\pm 1\}$. The eigenvalues of $M'$ are eigenvalues of the adjacency matrix $A_s$ of this signing, and moreover their eigenspaces span the space of vectors $v$ in $\RR^{V(G)}$ which are constant on each part of the $k$-partition.
\end{enumerate}
\end{lemma}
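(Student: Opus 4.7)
The plan is to reduce both parts of the lemma to the same linear-algebraic intertwining relation. Introduce the lifting operator $L\colon\RR^k\to\RR^{V(G)}$ defined by $(Lv)_x=v_i$ for $x\in V_i$, whose image $W\subseteq\RR^{V(G)}$ is the $k$-dimensional subspace of vectors that are constant on each part, together with the dual averaging operator $P\colon\RR^{V(G)}\to\RR^k$ given by $(Pw)_i=\sum_{x\in V_i}w_x$. Both parts amount to checking that the relevant adjacency matrix intertwines appropriately with $L$ (and, for (a), with $P$).

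For part (a), I would verify the identities $A_G L=LM$ and $PA_G=MP$ by a direct one-line computation: for $x\in V_i$ and any $v\in\RR^k$,
\[
(A_G Lv)_x=\sum_{j=1}^{k}\sum_{y\in V_j}A_G(x,y)\,v_j=\sum_{j=1}^k M_{ij}v_j=(Mv)_i,
\]
using that $G[V_i,V_j]$ being $M_{ij}$-regular means each $x\in V_i$ has exactly $M_{ij}$ neighbors in $V_j$; the identity $PA_G=MP$ is dual. Taking $v$ to be the Perron--Frobenius eigenvector of $M$ (strictly positive by irreducibility) and applying $A_G L=LM$ produces $Lv$ as an eigenvector of $A_G$ with eigenvalue $\lambda_1(M)$, whose form matches the one claimed; in particular $\lambda_1(A_G)\geq\lambda_1(M)$. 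For the reverse inequality, Perron--Frobenius applied to the nonnegative symmetric matrix $A_G$ supplies a nonzero nonnegative top eigenvector $w$, and then $Pw$ is a nonzero nonnegative vector satisfying $M(Pw)=\lambda_1(A_G)(Pw)$, which by Perron--Frobenius applied to the irreducible nonnegative matrix $M$ forces $\lambda_1(A_G)=\lambda_1(M)$.

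For part (b), the same style of computation, but with the defining property of an $M'$-signing (that the signs of edges from any $x\in V_i$ to $V_j$ sum to $M'_{ij}$) replacing the role played by the regularity of $G$, yields $A_s L=LM'$: for $x\in V_i$,
\[
(A_s Lv)_x=\sum_{j=1}^{k}\Big(\sum_{y\in V_j}s(xy)\Big)v_j=\sum_{j=1}^{k} M'_{ij}v_j=(M'v)_i.
\]
This shows $W$ is $A_s$-invariant and that $A_s|_W$ is conjugate via $L$ to the symmetric matrix $M'$. Picking an orthogonal eigenbasis $v_1,\dots,v_k$ of $M'$ with eigenvalues $\mu_1,\dots,\mu_k$, the vectors $Lv_1,\dots,Lv_k\in W$ are linearly independent $A_s$-eigenvectors with those same eigenvalues, and by dimension count they span all of $W$, giving both assertions of (b).

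No serious obstacle appears: the argument is bookkeeping around the intertwining identity $A_G L=LM$ and its $M'$-variant. The only subtlety is in part (a), where one must distinguish ``$\lambda_1(M)$ is an eigenvalue of $A_G$'' from ``$\lambda_1(M)$ is the \emph{top} eigenvalue of $A_G$,'' which is precisely what necessitates invoking Perron--Frobenius both on $A_G$ and on $M$.
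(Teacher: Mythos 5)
Your proposal is correct, and its skeleton (lift to $Lv$ and verify the intertwining $A_GL=LM$, $A_sL=LM'$ by a direct degree/signing computation) matches the paper's ``easy to check'' steps. The one place you diverge is the upper bound $\lambda_1(A_G)\leq\lambda_1(M)$ in part (a): you introduce the averaging operator $P$, pass a nonnegative Perron eigenvector $w$ of $A_G$ down to a nonnegative eigenvector $Pw$ of $M$, and invoke the fact that an irreducible nonnegative matrix admits a nonnegative eigenvector only at its Perron eigenvalue. The paper instead observes that $Lv$ is itself strictly positive (since $v$ is the Perron eigenvector of the irreducible $M$), and for a nonnegative matrix a strictly positive eigenvector automatically carries the spectral radius; this avoids $P$ entirely. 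Both routes are clean one-step applications of Perron--Frobenius, so the difference is cosmetic; your version is slightly more symmetric and your version of (b), with the dimension count over an orthogonal eigenbasis of $M'$, makes the spanning claim explicit where the paper leaves it implicit.
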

\begin{proof} For (a), it is easy to check that $\lambda_1(M)$ is an eigenvalue of $A_G$ with the eigenvector described as such. The fact that this is the top eigenvalue follows from the observation that, since both $A_G$ and $M$ possess nonnegative entries, $v$ is a Perron eigenvector, and so $\lambda_1(M)$ is the Perron eigenvalue of $A_G$. For (b), it is easy to check that if $w$ is an eigenvector of $M'$ with eigenvalue $\mu$, the vector which assigns a vertex in $V_i$ the value $w_i$ is an eigenvector of $A_s$ with eigenvalue $\mu$ as well.
\end{proof}

\noindent Due to this lemma, the eigenvalue for which we are able to guarantee large multiplicity is $\lambda_1(M')$. We know that it will appear in any $M'$-signing; all that remains is to show that, under certain conditions, no eigenvalue is larger. We first make these conditions explicit.

\begin{definition}\label{def:matrix-set} Denote by $\mathcal M$ the set of triples $(M,M',\beta)$ for which
\begin{enumerate}[(a)]
    \item $M$ is an irreducible $k\times k$ nonnegative integer symmetric matrix with zeros on the diagonal, and $M'$ is a sign matrix of $M$,
    \item $\beta\in(0,1/2)$, and
    \item if $D=(M-M')/2$, $\gamma=\lambda_1(|D-\beta M|)$ (where the absolute value is taken entrywise), and $\sigma=\sum_{i,j}\sqrt{M_{ij}}$, then
    $$\lambda_1\begin{pmatrix}(1-2\beta)\lambda_1(M')+2\gamma+7\sigma&2\lambda_1(D)\\2\lambda_1(D)&2\lambda_1(D)+\sigma\end{pmatrix}\leq\lambda_1(M').$$
\end{enumerate}
\end{definition}

\noindent We may now state the following proposition which generalizes \cref{prop:big-aug-step}.

\begin{proposition}\label{prop:big-aug-step-gen} Suppose $(M,M',\beta)\in\mathcal M$. Let $G$ be a graph lift of $M$ on $n$ vertices for which $\lambda_2(A_G)\leq \lambda_1(M')$, and let $\ell$ be the multiplicity of $\lambda_1(M')$ as an eigenvalue of $A_G$. Then there exists a graph lift $\tilde G$ of $G$ on at most $4n^9$ vertices with second eigenvalue $\lambda_1(M')$ of multiplicity at least $\ell+1$.
\end{proposition}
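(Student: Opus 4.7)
I would follow the high-level strategy of \cref{prop:big-aug-step}, generalized to the $k$-partite setting. The plan has two phases: a \emph{Ramanujan phase} to reduce the $L^\infty$ norm of the top eigenvectors of $A_G$, followed by a \emph{multiplicity-incrementing phase} via a carefully chosen $M'$-signing. In each step of the Ramanujan phase I would apply \cref{cor:ramanujan} independently to each bipartite piece $G[V_i, V_j]$ (an $M_{ij}$-regular bipartite graph), obtaining a signing $s_{ij}$ with $\|A_{s_{ij}}\| \leq 2\sqrt{M_{ij}-1}$, and combine them into a single signing $s$ of $G$; the triangle bound then gives $\|A_s\| \leq \sum_{i<j} 2\sqrt{M_{ij}-1} \leq \sigma$ (using that $\sigma = \sum_{i,j}\sqrt{M_{ij}}$ is the ordered sum), so each lift adds no eigenvalues of magnitude exceeding $\sigma$. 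After a suitable number $t$ of such steps (with $2^t$ on the order of $n^8$), the resulting graph lift $G'$ of $M$ has at most $2n^9$ vertices, exactly the same above-$\sigma$ spectrum as $A_G$, and corresponding eigenvectors of the form $\tfrac1{\sqrt{2^t}}(x\oplus\cdots\oplus x)$ with $L^\infty$ norm at most $\sqrt{n/|V(G')|}$.

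The core new tool is a generalization of \cref{lem:aug-step}: under the hypotheses on $G'$ just obtained, there exists an $M'$-signing $s$ of $G'$ with $\lambda_1(A_s) = \lambda_1(M')$. To construct $s$, for each pair $i<j$ I would apply \cref{cor:factor-selection} to $G'[V_i, V_j]$ with $U$ taken to be the span of the top parts-zero eigenvectors of $A_{G'}$, obtaining a $D_{ij}$-factor $H_{ij}$ (where $D = (M-M')/2$) that satisfies $|v^\intercal(A_{H_{ij}} - (D_{ij}/M_{ij})A_{G'[V_i,V_j]})v| \leq 7\sqrt{M_{ij}}\|v\|^2$ for $v \in U$. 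The union $H = \bigsqcup_{i<j} H_{ij}$ then determines a signing $s$ via $s(e) = -1$ iff $e \in H$, and each vertex in $V_i$ has exactly $D_{ij}$ edges signed $-1$ going to $V_j$, confirming that $s$ is an $M'$-signing. Since $\lambda_1(M')$ is then automatically an eigenvalue of $A_s$ by \cref{lem:signing-spectrum}(b), it suffices to show $\lambda_1(A_s) \leq \lambda_1(M')$.

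For this bound, I would decompose $\RR^{V(G')} = P \oplus P^\perp$ into parts-constant and parts-zero subspaces. A short check using the $M'$-signing identity $\sum_{w\in V_j,\,w\sim v}s(vw) = M'_{ij}$ shows that $A_s$ preserves both subspaces; on $P$, $A_s$ acts as $M'$ after the natural identification, yielding $p^\intercal A_s p \leq \lambda_1(M')\|p\|^2$. On $P^\perp$, writing $q = x + y$ with $x \in U$ and $y \in U^\perp \cap P^\perp$, I would bound the terms in $q^\intercal A_s q = q^\intercal A_{G'} q - 2 q^\intercal A_H q$ using: $x^\intercal A_{G'}x \leq \lambda_1(M')\|x\|^2$ (since $\lambda_2(A_{G'}) \leq \lambda_1(M')$) and $y^\intercal A_{G'}y \leq \sigma\|y\|^2$ (by definition of $U$); $x^\intercal A_H x \geq \beta x^\intercal A_{G'}x - (\gamma + \tfrac{7}{2}\sigma)\|x\|^2$, obtained by combining the pairwise concentration with the Perron--Frobenius bound $\|B - \beta A_{G'}\| \leq \gamma$, where $B = \sum_{i<j}(D_{ij}/M_{ij})A_{G'[V_i,V_j]}$ (this bound follows by dominating $|B - \beta A_{G'}|$ entrywise by the non-negative weighted graph lift of $|D - \beta M|$, whose Perron eigenvalue is $\gamma$); and $\|A_H\| \leq \lambda_1(D)$ via \cref{lem:signing-spectrum}(a) applied to $H$ viewed as a graph lift of $D$. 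Chaining these yields $q^\intercal A_s q \leq R\|x\|^2 + 4\lambda_1(D)\|x\|\|y\| + T\|y\|^2$ with $R = (1-2\beta)\lambda_1(M') + 2\gamma + 7\sigma$ and $T = 2\lambda_1(D) + \sigma$; maximizing this quadratic form over $\|x\|^2 + \|y\|^2 = 1$ produces exactly the top eigenvalue of the $2\times 2$ matrix in \cref{def:matrix-set}(c), which is at most $\lambda_1(M')$ by assumption. Thus $\lambda_1(A_s) = \lambda_1(M')$, and the $2$-lift $\tilde G$ of $G'$ via $s$ is a graph lift of $M$ on at most $4n^9$ vertices with $\lambda_2(A_{\tilde G}) = \lambda_1(M')$ of multiplicity at least $\ell + 1$. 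The main technical obstacle will be executing this quadratic-form bookkeeping precisely: I expect the Perron--Frobenius comparison yielding $\gamma$, the verification that $A_s$ preserves $P$ and $P^\perp$, and the unordered-vs-ordered indexing that produces the coefficient $7\sigma$ (rather than $14\sigma$) from summing the $7\sqrt{M_{ij}}$ bound over $i<j$ to be the steps where errors are most likely.
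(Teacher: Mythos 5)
Your proposal is correct and follows essentially the same two-phase route as the paper: interleave pairwise Ramanujan lifts (via \cref{thm:ramanujan} applied to each $G[V_i\sqcup V_j]$ and combined by the triangle inequality, giving $\|A_s\|\leq\sigma$) to push the top eigenvectors' $L^\infty$ norm down to $\sqrt{n/n'}$, then apply \cref{cor:factor-selection} between each pair of parts with $U$ the span of the above-$\sigma$ eigenvectors orthogonal to the parts-constant subspace $U_0$, and close with the same quadratic-form computation in $\|x\|,\|y\|$ with coefficients $R=(1-2\beta)\lambda_1(M')+2\gamma+7\sigma$, $S=2\lambda_1(D)$, $T=2\lambda_1(D)+\sigma$. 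The decomposition, the Perron--Frobenius bound $\|B-\beta A_{G'}\|\leq\gamma$, and the bookkeeping leading to $7\sigma$ all match the paper's proof of \cref{lem:aug-step-gen} and \cref{prop:big-aug-step-gen}.
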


\noindent As in the proof of \cref{prop:big-aug-step}, the main technical piece will be a lemma which gives the existence of a multiplicity-incrementing lift. We apply such a lift after applying enough Ramanujan lifts between each pair of parts.

\begin{lemma}\label{lem:aug-step-gen} Suppose $(M,M',\beta)\in\mathcal M$, and let $G$ be a graph lift of $M$ on $n$ vertices. Let $m$ be a positive integer. Define $\sigma=\sum_{i,j}\sqrt{M_{ij}}$, as in \cref{def:matrix-set}. Suppose
\begin{enumerate}[(a)]
    \item $\lambda_2(A_G)\leq \lambda_1(M')$,\label{cond:lambda2}
    
    \item at most $m$ eigenvalues of $G$ exceed $\sigma$,\label{cond:not-many-eigs}
    
    \item $A_G$ possesses an orthonormal eigenbasis, a subset of which is a basis for those vectors constant on each part of the $k$-partition of $V(G)$ afforded by $M$, in which each component of each eigenvector with eigenvalue greater than $\sigma$ is at most $\sqrt{m/n}$ in magnitude, and\label{cond:nice-eigs}
    
    \item $n>m^9$.\label{cond:large-graph}
\end{enumerate}
Then there exists a $M'$-signing $s:E(G)\to\{\pm 1\}$ with adjacency matrix $A_s$ satisfying $\lambda_1(A_s)=\lambda_1(M')$.
\end{lemma}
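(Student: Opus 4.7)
My plan is to mimic the proof of \cref{lem:aug-step}, replacing the single random $a$-factor there with a tuple of random $D_{ij}$-factors indexed by unordered pairs of parts. Write $D:=(M-M')/2$, a symmetric nonnegative integer matrix with zero diagonal, so that an $M'$-signing of the graph lift $G$ corresponds, for each $\{i,j\}$, to the choice of a $D_{ij}$-factor $H_{ij}$ of the $M_{ij}$-regular bipartite graph $G_{ij}:=G[V_i\sqcup V_j]$.

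\textbf{Construction of the signing.} Let $U\subseteq\RR^{V(G)}$ denote the span of eigenvectors of $A_G$ whose eigenvalue strictly exceeds $\sigma$; by hypotheses (b) and (c), $U$ has dimension at most $m$ and admits an orthonormal basis whose vectors have $L^\infty$-norm at most $\sqrt{m/n}$. For each pair $\{i,j\}$ I would apply \cref{cor:factor-selection} --- whose hypotheses are supplied by (b), (c), and (d), taking $K$ of order $m$ --- to obtain a $D_{ij}$-factor $H_{ij}\subseteq G_{ij}$ with
$$\Bigl|v^\intercal\Bigl(A_{H_{ij}}-\tfrac{D_{ij}}{M_{ij}}A_{G_{ij}}\Bigr)v\Bigr|\leq 7\sqrt{M_{ij}}\quad\text{for every unit }v\in U.$$
Setting $s(e)=-1$ precisely when $e\in\bigcup_{i<j}H_{ij}$ gives an $M'$-signing, since the signs of edges from a vertex of $V_i$ to $V_j$ sum to $M_{ij}-2D_{ij}=M'_{ij}$; its adjacency matrix is $A_s=A_G-2A_H$ with $A_H:=\sum_{i<j}A_{H_{ij}}$. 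By \cref{lem:signing-spectrum}(b), $\lambda_1(M')$ is automatically an eigenvalue of $A_s$, so the task reduces to proving the upper bound $\lambda_1(A_s)\leq\lambda_1(M')$.

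\textbf{The spectral bound.} Let $U_1$ be the $k$-dimensional subspace of vectors constant on each part. Since $A_s$ preserves $U_1$ and acts there as $M'$, we have $\lambda_1(A_s|_{U_1})=\lambda_1(M')$, and it suffices to show $w^\intercal A_s w\leq\lambda_1(M')$ for every unit $w\in U_1^\perp$. For such $w$, write $w=x+y$ with $x\in U$ and $y\in U^\perp$; spectral orthogonality gives $x^\intercal A_G y=0$, and $y^\intercal A_Gy\leq\sigma\|y\|^2$ since $y\in U^\perp$. Because the Perron eigenvector of $A_G$ lies in $U_1$ (by \cref{lem:signing-spectrum}(a)) and $w\perp U_1$, $x$ is orthogonal to the Perron eigenvector, so hypothesis (a) yields $x^\intercal A_Gx\leq\lambda_2(A_G)\|x\|^2\leq\lambda_1(M')\|x\|^2$. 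Summing the concentration estimates and writing $\sum_{i<j}\tfrac{D_{ij}}{M_{ij}}A_{G_{ij}}=\beta A_G+E$, where the $(i,j)$-block of $E$ has spectral norm $|D_{ij}-\beta M_{ij}|$, a block-matrix norm bound yields $\|E\|\leq\lambda_1(|D-\beta M|)=\gamma$; hence
$$x^\intercal A_H x\geq\beta\,x^\intercal A_G x-\bigl(\gamma+\tfrac{7\sigma}{2}\bigr)\|x\|^2.$$
The same block estimate gives $\|A_H\|\leq\lambda_1(D)$, so the cross and $y$-terms of $w^\intercal A_H w$ are controlled by $\lambda_1(D)\|x\|\|y\|$ and $\lambda_1(D)\|y\|^2$, respectively.

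Assembling these bounds (using $1-2\beta>0$ and $x^\intercal A_Gx\leq\lambda_1(M')\|x\|^2$) gives
$$w^\intercal A_s w\leq \bigl[(1-2\beta)\lambda_1(M')+2\gamma+7\sigma\bigr]\|x\|^2+4\lambda_1(D)\|x\|\|y\|+\bigl(2\lambda_1(D)+\sigma\bigr)\|y\|^2.$$
Since $\|x\|^2+\|y\|^2=1$, the right-hand side is at most the top eigenvalue of the $2\times 2$ matrix appearing in \cref{def:matrix-set}(c), which by hypothesis is at most $\lambda_1(M')$. I expect the main obstacle to be the two block-matrix norm bounds ($\|E\|\leq\gamma$ and $\|A_H\|\leq\lambda_1(D)$): these translate Perron-type data about the small $k\times k$ matrices $D$ and $|D-\beta M|$ into spectral-norm bounds on full $V(G)\times V(G)$ matrices, and are obtained by a coordinate-by-part computation of the form $\|Nv\|^2=\sum_i\bigl\|\sum_j N_{ij}v^{(j)}\bigr\|^2\leq\|Qw\|^2$ where $w_i=\|v^{(i)}\|$ and $Q$ is the $k\times k$ matrix of block spectral norms.
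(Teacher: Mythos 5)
Your proof is correct and follows essentially the same approach as the paper's: both reduce to bounding $w^\intercal A_s w$ for $w$ orthogonal to the constant-on-parts subspace, decompose $w$ against the large-eigenvalue subspace, and apply \cref{cor:factor-selection} pairwise to obtain $D_{ij}$-factors. The only cosmetic difference is that you take $U$ to span \emph{all} eigenvectors with eigenvalue exceeding $\sigma$ (the paper's $U_1$ excludes those lying in the constant-on-parts space), but since the projection of $w$ onto $U$ automatically lands in $U\cap U_0^\perp$ when $w\perp U_0$, the resulting $x,y$ and the final quadratic form are the same.
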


Each of the conditions in this lemma parallels a condition in \cref{lem:aug-step}. In particular, the condition that $(M,M',\beta)\in\mathcal M$ analogizes the degree bound $d\geq 22000$ and the selection of $a=\lceil 12\sqrt d\rceil$. The proof will be quite similar to that of \cref{lem:aug-step}.

\begin{proof} As in \cref{def:matrix-set}, define $D=(M-M')/2$ and $\gamma=\lambda_1(|D-\beta M|)$. Let $V_1\sqcup V_2\sqcup \cdots \sqcup V_k$ be the $k$-partition of $V(G)$ afforded by $M$. Let $U_0\subset\RR^{V(G)}$ be the space of vectors which are constant on each $V_i$. By \cref{lem:signing-spectrum}, the top eigenvalue of $G$ is $\lambda_1(M)$, and its eigenvector lies in $U_0$. Also by \cref{lem:signing-spectrum}, we know that for any $M'$-signing $s$ of $G$ there exists a basis of $U_0$ which is contained within an eigenbasis of $A_s$, one vector in this basis contributes an eigenvalue $\lambda_1(M')$ to $A_s$, and no vector in this basis contributes a larger eigenvalue. So, it suffices to find an $M'$-signing $s$ for which, for any vector $w\in\RR^{V(G)}$ orthogonal to $U_0$, $w^\intercal A_sw\leq \lambda_1(M')$.

\vspace{2mm}

We construct this signing separately on the edges between $V_i$ and $V_j$ for each pair $(i,j)$. Let $G_{ij}$ be the graph $G[V_i\sqcup V_j]$ on vertex set $V(G)$, so that
$$\sum_{1\leq i<j\leq k}A_{G_{ij}}=A_G.$$
Let $v_2,\dots,v_{m'}$ be the eigenvectors of $A_G$ in the partition described in \ref{cond:nice-eigs} with eigenvalues exceeding $\sigma$ and which do not lie in $U_0$, and let $U_1$ be the subspace of $\RR^{V(G)}$ they span. 

\vspace{2mm}

By \ref{cond:not-many-eigs}, $\dim U_1\leq m'<m$. The space $U_1$ satisfies the condition of \cref{cor:factor-selection} with $K=m$. So, since $n\geq m^9$ (condition \ref{cond:large-graph}), we can for each $1\leq i<j\leq k$ find a $D_{ij}$-factor $H_{ij}$ of $G_{ij}$ which satisfies
$$\left|v^\intercal\left(A_{H_{ij}}-\frac{D_{ij}}{M_{ij}}A_{G_{ij}}\right)v\right|\leq 7\sqrt{M_{ij}}\|v\|^2$$
for every $v\in U_1$. We conclude, letting $H$ be the graph formed by the union of the edge sets $H_{ij}$ and letting $B=\sum \frac{D_{ij}}{M_{ij}}A_{G_{ij}}$, that
$$\left|v^\intercal(A_H-B)v\right|\leq 7\|v\|^2\left(\sum_{1\leq i<j\leq k}\sqrt{M_{ij}}\right)=\frac 72\sigma\|v\|^2$$
for every $v\in U_1$. Let $s$ be the signing for which edges in $H_{ij}$ for any $(i,j)$ are assigned $-1$ and all other edges are assigned $1$. By definition, $s$ is an $M'$-signing; we also have $A_s=A_G-2A_H$. 

\vspace{2mm}

Now, let $w\in\RR^{V(G)}$ be any unit vector orthogonal to $U_0$; write $w=x+y$, where $x\in U_1$ and $y$ lies in the orthogonal complement of $U_0+U_1$; note that $\|x\|^2+\|y\|^2=1$. We first see
\begin{align*}
w^\intercal A_Hw
&=x^\intercal A_Hx+2x^\intercal A_Hy+y^\intercal A_Hy\\
&\geq x^\intercal A_Hx-2\|x\|\|y\|\|A_H\|-\|y\|^2\|A_H\|\\
&\geq x^\intercal Bx-\frac 72\sigma\|x\|^2-(2\|x\|\|y\|+\|y\|^2)\|A_H\|\\
&\geq \beta x^\intercal A_Gx-\left(\|B-\beta A_G\|+\frac 72\sigma\right)\|x\|^2-(2\|x\|\|y\|+\|y\|^2)\|A_H\|.
\end{align*}
Since $H$ is a graph lift of $D$, and the entries of $D$ are nonnegative integers, \cref{lem:signing-spectrum}(a) implies that $\|A_H\|=\lambda_1(A_H)=\lambda_1(D)$. We now upper-bound $\|B-\beta A_G\|$. For any unit vector $u\in \RR^{V(G)}$, write $u=u_1+\cdots+u_k$, where $u_i$ is supported on $V_i$. We have, similarly to the proof of \cref{lem:signing-spectrum},
\begin{align*}
u^\intercal (B-\beta A_G)u
&=\sum_{1\leq i<j\leq k}\left(\frac{D_{ij}}{M_{ij}}-\beta\right)u^\intercal A_{G_{ij}}u\\
&=\sum_{1\leq i<j\leq k}\left(\frac{D_{ij}}{M_{ij}}-\beta\right)2M_{ij}\|u_i\|\|u_j\|\\
&=\sum_{i=1}^k\sum_{j=1}^k|D_{ij}-\beta M_{ij}|\|u_i\|\|u_j\|\leq \lambda_1(|D-\beta M|)=\gamma,
\end{align*}
since $\|u_1\|^2+\cdots+\|u_k\|^2=1$. So, $\|B-\beta A_G\|\leq \gamma$. This implies that
\begin{align*}
w^\intercal A_sw
&=w^\intercal(A_G-2A_H)w\\
&=x^\intercal A_Gx+y^\intercal A_Gy-2w^\intercal A_Hw\\
&\leq x^\intercal A_Gx+\sigma\|y\|^2-2\beta x^\intercal A_Gx+\left(2\gamma+7\sigma\right)\|x\|^2+4\lambda_1(D)\|x\|\|y\|+2\lambda_1(D)\|y\|^2\\
&=(1-2\beta)x^\intercal A_Gx+\left(2\gamma+7\sigma\right)\|x\|^2+4\lambda_1(D)\|x\|\|y\|+\big(2\lambda_1(D)+\sigma\big)\|y\|^2\\
&\leq\left((1-2\beta)\lambda_1(M')+2\gamma+7\sigma\right)\|x\|^2+4\lambda_1(D)\|x\|\|y\|+\big(2\lambda_1(D)+\sigma\big)\|y\|^2,
\end{align*}
where we have used condition \ref{cond:lambda2} in the last bound, since $x\in U_0$. Since we have for any $R,S,T$ and any $\|x\|,\|y\|\in\RR$ with $\|x\|^2+\|y\|^2=1$, 
$$R\|x\|^2+2S\|x\|\|y\|+T\|y\|^2=\begin{pmatrix}\|x\|&\|y\|\end{pmatrix}\begin{pmatrix}R&S\\S&T\end{pmatrix}\begin{pmatrix}\|x\|\\\|y\|\end{pmatrix}\leq \lambda_1\begin{pmatrix}R&S\\S&T\end{pmatrix},$$
the result follows from the definition of $\mathcal M$. 
\end{proof}

\begin{proof}[Proof of \cref{prop:big-aug-step-gen}] Armed with the above lemma, the proof will be very similar to that of \cref{prop:big-aug-step}. Let $t$ be a nonnegative integer. We will first construct a graph lift $G_t$ of $M$ on $n_t:=n2^t$ vertices for which, for each eigenvalue $\mu$ of $A_{G_t}$ with $\mu\geq \sigma$, the $\mu$-eigenspace of $A_{G_t}$ is exactly
$$\left\{\underbrace{x\oplus x\oplus \cdots\oplus x}_{2^t\text{ copies of }x}\colon A_Gx=\mu x\right\}.$$
Indeed, starting with $G_0=G$ when $t=0$, we can construct these graphs by successively applying Ramanujan $2$-lifts between each pair of parts. Let $V_1\sqcup V_2\sqcup\cdots\sqcup V_k$ be the $k$-partition of $V(G_t)$ afforded by $M$. For each $1\leq i<j\leq k$, let $s_{ij}:E(G_t[V_i\sqcup V_j])\to\{\pm 1\}$ be a signing of the $M_{ij}$-regular bipartite graph $G_t[V_i\sqcup V_j]$ whose adjacency matrix has second eigenvalue at most $2\sqrt{M_{ij}-1}<2\sqrt{M_{ij}}$, guaranteed to exist by \cref{thm:ramanujan} (if $M_{ij}=0$, then $s_{ij}$ is trivial and its adjacency matrix has second eigenvalue $0=2\sqrt{M_{ij}}$). Construct a signing $s:E(G_t)\to\{\pm 1\}$ by combining the partial signings $s_{ij}$, so that $A_s=\sum_{1\leq i<j\leq k}A_{s_{ij}}$ has top eigenvalue at most
$$\sum_{1\leq i<j\leq k}\lambda_1(A_{s_{ij}})\leq \sum_{1\leq i<j\leq k}2\sqrt{M_{ij}}=\sum_{i=1}^k\sum_{j=1}^k\sqrt{M_{ij}}=\sigma.$$
Then the $2$-lift $G_{t+1}$ of $G_t$ associated to $s$ has no eigenvalues exceeding $\sigma$ which do not come from $A_{G_t}$, and so $G_{t+1}$ satisfies the desired properties. In particular, for each $t$,
$$\lambda_2(A_{G_t})\leq\max(\lambda_2(A_G),\sigma)\leq \lambda_1(M'),$$
(we know $\sigma\leq\lambda_1(M')$ by condition (c) in the definition of $\mathcal M$), and $A_{G_t}$ has eigenvalue $\lambda_1(M')$ with multiplicity $k$. Identically to the proof of \cref{prop:big-aug-step}, conditions (b) and (c) of \cref{lem:aug-step-gen} are satisfied by $G_t$, since we have described the eigenspaces of $A_{G_t}$ with large eigenvalue. So, we can apply this lemma to get an $M'$-signing $s:E(G)\to\{\pm 1\}$ with $\lambda_1(A_s)=\lambda_1(M')$ as long as $2^j>n^8$. The spectrum of the associated $2$-lift contains the eigenvalue $\lambda_1(M')$ with multiplicity at least $\ell+1$, and so selecting $t$ to be the smallest positive integer satisfying $2^t>n^8$ finishes the proof.
\end{proof}

These multiplicity-incrementing steps give us graphs with large multiplicity of fixed second eigenvalue.

\begin{corollary}\label{cor:general-loglog} Suppose $(M,M',\beta)\in \mathcal M$ and that there exists a connected graph lift $G_0$ of $M$ on at most $r$ vertices with $\lambda_2(A_{G_0})\leq \lambda_1(M')$. Then there exists an infinite sequence $r=n_0<n_1<\cdots$ of positive integers, each satisfying $n_{i+1}\leq 4n_i^9$, for which there exists a graph on $n_i$ vertices with top eigenvalue $\lambda_1(M)$ and second eigenvalue $\lambda_1(M')$ with multiplicity at least $i=\Omega(\log\log n_i)$. Furthermore, if the all-ones vector is an eigenvector of $M$, then these graphs can be chosen to be regular.    
\end{corollary}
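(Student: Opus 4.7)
The plan is to construct the sequence $G_0, G_1, G_2, \ldots$ of graph lifts of $M$ by iteratively applying \cref{prop:big-aug-step-gen} starting from the given base lift $G_0$. Writing $n_i = |V(G_i)|$ (so $n_0 \leq r$), the inductive setup is as follows: given a graph lift $G_i$ of $M$ with $\lambda_2(A_{G_i}) \leq \lambda_1(M')$ and with $\lambda_1(M')$ an eigenvalue of multiplicity $\ell_i \geq i$, feed $G_i$ into \cref{prop:big-aug-step-gen} to obtain $G_{i+1}$ on $n_{i+1} \leq 4n_i^9$ vertices whose second eigenvalue equals $\lambda_1(M')$ with multiplicity at least $\ell_i + 1 \geq i + 1$. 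Since by construction $\lambda_2(A_{G_{i+1}}) = \lambda_1(M')$, the hypothesis required for the next application is automatically preserved, and the iteration proceeds indefinitely. Solving the recursion $n_{i+1} \leq 4n_i^9$ yields $\log \log n_i = O(i)$, hence $i = \Omega(\log \log n_i)$.

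To see that the top eigenvalue of each $G_i$ is $\lambda_1(M)$, note that each $G_i$ is a graph lift of the irreducible matrix $M$, and the same property is inherited by each connected component, since the $M_{ij}$-regularity of the bipartite subgraph between parts $V_i$ and $V_j$ is component-local. Applying \cref{lem:signing-spectrum}(a) to each connected component then gives $\lambda_1(A_{G_i}) = \lambda_1(M)$. For the regularity claim, if the all-ones vector is an eigenvector of $M$, then by Perron--Frobenius applied to the irreducible nonnegative matrix $M$ its eigenvalue must be the Perron eigenvalue $\lambda_1(M)$; in particular every row sum of $M$ equals $\lambda_1(M)$. Consequently every vertex of any graph lift of $M$ has total degree $\sum_j M_{ij} = \lambda_1(M)$, so the graphs $G_i$ are $\lambda_1(M)$-regular.

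The substantive work behind this corollary is already done in \cref{prop:big-aug-step-gen}; what remains here is essentially bookkeeping. The only point that requires a moment's thought is verifying that the hypothesis $\lambda_2 \leq \lambda_1(M')$ persists under iteration, but this is immediate from the output guarantee of \cref{prop:big-aug-step-gen}, which produces graphs with second eigenvalue \emph{equal to} $\lambda_1(M')$. Thus there is no serious technical obstacle beyond correctly threading the inductive invariants through each application of the proposition.
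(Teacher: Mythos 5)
Your proposal is correct and follows essentially the same route as the paper: iterate \cref{prop:big-aug-step-gen} starting from $G_0$, track that $\lambda_2 = \lambda_1(M')$ persists, solve the recursion $n_{i+1}\leq 4n_i^9$ to get $i=\Omega(\log\log n_i)$, read off the top eigenvalue from \cref{lem:signing-spectrum}(a), and deduce regularity from the all-ones vector being the Perron eigenvector of $M$ (your row-sum observation is a harmless, slightly more explicit rephrasing of the paper's invocation of the Perron eigenvector of $A_{G_i}$). The aside about connected components is unnecessary, since the Perron argument in the proof of \cref{lem:signing-spectrum}(a) already handles reducible $A_G$, but it does no harm.
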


\begin{proof} This parallels the proof of \cref{thm:large-mult-nice}(a). In the previous section. Let $n_0=r$, and define a sequence $(n_i)$ recursively so that $n_{i+1}$ is the unique integer in the interval $(2n_i^9,4n_i^9]$ for which $n_{i+1}/n_i$ is a power of $2$. Repeated applications of \cref{prop:big-aug-step-gen} show that, for each $i\geq 1$, there exists a graph lift $G_i$ of $M$ such that
\begin{enumerate}[(i)]
    \item $G_i$ has $n_i$ vertices, and
    \item $\lambda_1(M')$ is the second eigenvalue of $G_i$ and has multiplicity at least $i$.
\end{enumerate}
Since $G_i$ is a graph lift of $M$, its top eigenvalue is $\lambda_1(M)$ by \cref{lem:signing-spectrum}(a). If the all-ones vector is an eigenvector of $M$, then it is the Perron eigenvector of $A_{G_i}$ (again by \cref{lem:signing-spectrum}(a)) for each $i$, and so (since each $G_i$ will be connected) each $G_i$ is regular with degree $\lambda_1(M)$.
\end{proof}

To conclude the proof of \cref{thm:large-mult-nice}(b), we need to choose our matrices $M$ and $M'$ and show that they satisfy the necessary properties. We do this in the following lemma.

\begin{lemma}\label{lem:matrix-choice} Let $t>u$ be positive integers of the same parity. Define the matrices
$$M=\begin{pmatrix}&t&t&3\\t&&3&t\\t&3&&t\\3&t&t&\end{pmatrix},\ M'=\begin{pmatrix}&u&u&1\\u&&-3&u\\u&-3&&u\\1&u&u&\end{pmatrix},$$
and let $\beta=(t-u)/(2t)$. Suppose $(5/6)t\leq u\leq t-56\sqrt t-62$. Then
\begin{enumerate}[(a)]
    \item $\lambda_1(M')=2\sqrt{u^2+1}-1$,
    \item there exists a (connected) graph lift $G_0$ of $M$ on $4t$ vertices with $\lambda_2(A_{G_0})\leq\lambda_1(M')$, and
    \item $(M,M',\beta)\in\mathcal M$.
\end{enumerate}
\end{lemma}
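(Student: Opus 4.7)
The plan is to exploit the common $\mathbb{Z}/2 \times \mathbb{Z}/2$ symmetry shared by $M$, $M'$, and $D := (M-M')/2$: each has the shape $\bigl(\begin{smallmatrix}& a & a & b\\ a && c & a\\ a & c && a\\ b & a & a &\end{smallmatrix}\bigr)$ and commutes with the involutions swapping $\{1,4\}$ and $\{2,3\}$. Changing basis to $f_1 = e_1+e_4$, $f_2 = e_1-e_4$, $f_3 = e_2+e_3$, $f_4 = e_2-e_3$ block-diagonalizes any such matrix into a $2 \times 2$ block $\bigl(\begin{smallmatrix} b & 2a \\ 2a & c \end{smallmatrix}\bigr)$ on $\langle f_1, f_3 \rangle$ and scalars $-b$ on $f_2$, $-c$ on $f_4$. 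For $M'$ (with $a=u, b=1, c=-3$) this yields eigenvalues $\{-1,\ 3,\ -1 \pm 2\sqrt{u^2+1}\}$, giving $\lambda_1(M') = 2\sqrt{u^2+1}-1$ once $u \geq 2$ (which follows from $u \geq 5t/6$ and the implied largeness of $t$); this proves (a). The same procedure gives $\lambda_1(M) = 2t+3$ with remaining eigenvalues $\{-3, -3, 3-2t\}$, and $\lambda_1(D) = 2 + \sqrt{(t-u)^2+1}$, both needed below.

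For (b), I would build $G_0$ on four parts $V_1, V_2, V_3, V_4$ of size $t$: between $(V_i, V_j)$ with $M_{ij} = t$ take $K_{t,t}$ (the only $t$-regular bipartite graph on $t + t$ vertices), and between $(V_1, V_4)$ and between $(V_2, V_3)$ take any $3$-regular bipartite graph. The $K_{t,t}$ blocks alone connect the four parts, so $G_0$ is connected. Decompose $\RR^{V(G_0)} = W_0 \oplus W_\perp$, where $W_0 \cong \RR^4$ consists of vectors constant on each $V_i$ and $W_\perp$ is its orthogonal complement; both are $A_{G_0}$-invariant. On $W_0$, $A_{G_0}$ acts as $M$, contributing eigenvalues $\{2t+3, -3, -3, 3-2t\}$; on $W_\perp$ the $K_{t,t}$-blocks vanish (since $J_t$ annihilates $\mathbf{1}^\perp$), leaving only the two $3$-regular bipartite blocks restricted to $\mathbf{1}^\perp$, an operator of norm at most $3$. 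Hence $\lambda_2(A_{G_0}) \leq \max(-3, 3) = 3 \leq \lambda_1(M')$.

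For (c), with $\beta = (t-u)/(2t)$, the weight-$t$ entries of $D - \beta M$ cancel (since $\beta t = (t-u)/2$), leaving only the $(1,4)$ entry $1 - 3\beta$ and the $(2,3)$ entry $3 - 3\beta$. Its absolute-value matrix is block-diagonal in the pairing $\{1,4\}, \{2,3\}$, with top eigenvalue $\gamma = 3 - 3\beta = 3(t+u)/(2t)$; and $\sigma = \sum_{i,j}\sqrt{M_{ij}} = 8\sqrt{t} + 4\sqrt{3}$. Setting $L := \lambda_1(M')$, $R := (u/t)L + 2\gamma + 7\sigma$, $S := 2\lambda_1(D)$, $T := S + \sigma$, the condition of \cref{def:matrix-set} asks $\lambda_1\bigl(\begin{smallmatrix} R & S \\ S & T \end{smallmatrix}\bigr) \leq L$; provided $R, T \leq L$, this is equivalent (via the quadratic formula) to
$$S^2 \leq (L-R)(L-T).$$

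The hard part will be verifying this last inequality. Writing $v := t - u \in [56\sqrt{t}+62,\ t/6]$, two regimes deserve attention. When $v = \alpha t$ with $\alpha$ of order $1$ (i.e., $v$ near $t/6$), the dominant $t^2$ terms on both sides reduce the inequality to $(1-\alpha)(1-2\alpha) \geq \alpha$, which holds for $\alpha \leq 1 - \sqrt{2}/2 \approx 0.293$; the hypothesis $u \geq 5t/6$ (so $\alpha \leq 1/6$) is precisely what keeps us in this range. When $v$ is of order $\sqrt{t}$ (near the lower boundary), I would use $L \geq 2u-1$ to estimate $(v/t)L \geq 2v - 2v^2/t - v/t$, giving $L - R \geq 56\sqrt{t} - O(1)$ at $v = 56\sqrt{t}+62$ (the $112\sqrt{t}$ from $2v$ is absorbed down to $56\sqrt{t}$ by $7\sigma$) and $L - T \gtrsim 2t - O(\sqrt{t})$; so $(L-R)(L-T) \gtrsim 112\,t^{3/2}$ dominates $S^2 \approx 4v^2 \approx 12544\,t$ as soon as $\sqrt t$ exceeds a constant of order $112$. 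This threshold is implied by the nonemptiness of the interval for $u$: $(5/6)t \leq t - 56\sqrt{t}-62$ forces $t \geq 336\sqrt{t}+372$, hence $t \gtrsim 10^5$, giving ample margin. The constants $56$ and $62$ are calibrated precisely so that the $7\sigma = 56\sqrt{t} + O(1)$ contribution is exactly absorbed by the $2v \geq 112\sqrt{t}$ lower bound.
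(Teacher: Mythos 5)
Parts (a) and (b) are correct and essentially match the paper: your $\mathbb{Z}/2\times\mathbb{Z}/2$ block-diagonalization via the involutions swapping $\{1,4\}$ and $\{2,3\}$ cleanly produces the four spectra (and in fact reveals a sign typo in the paper's eigenvalue list for $M'$, which should read $\{-1+2\sqrt{u^2+1},\,-1-2\sqrt{u^2+1},\,3,\,-1\}$ — as written the paper's list has trace $4\sqrt{u^2+1}\neq 0$). Your invariant-subspace argument for (b) is the same estimate as the paper's, phrased via the $W_0\oplus W_\perp$ decomposition rather than Weyl's inequality; both give $\lambda_2(A_{G_0})\le 3$.

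For (c), however, there is a genuine gap. Your reduction — positive semidefiniteness of $LI-\left(\begin{smallmatrix}R&S\\S&T\end{smallmatrix}\right)$ is equivalent to $R,T\le L$ together with $(L-R)(L-T)\ge S^2$ — is sound, and your exact computations $\gamma=3-3\beta$ and $\lambda_1(D)=2+\sqrt{(t-u)^2+1}$ are even sharper than the paper's working bounds $\gamma\le 3$ and $\lambda_1(D)\le 2t\beta+3$. But you only sanity-check the determinant inequality asymptotically at the two endpoints $t-u\approx 56\sqrt t$ and $t-u\approx t/6$; you never handle intermediate values of $t-u$, nor do you verify $R\le L$ and $T\le L$. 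Nothing obviously interpolates: as $t-u$ increases, $S^2$ and $L-R$ both increase while $L-T$ decreases, so the sign of the derivative of $(L-R)(L-T)-S^2$ is not evidently constant. The paper avoids this entirely by first replacing each entry of the $2\times 2$ matrix by a simple closed-form upper bound, then invoking sub-additivity of $\lambda_1$ to split the result into a matrix of order $\sqrt t$ (with $\lambda_1\le 56\sqrt t+61$) plus $t$ times a fixed constant matrix (with $\lambda_1\le 2-6\beta$, valid for all $\beta\le 1/12$); since $t(2-6\beta)=3u-t$, comparing to $2u-1\le\lambda_1(M')$ collapses condition (c) of \cref{def:matrix-set} to the single scalar inequality $56\sqrt t+62\le t-u$, uniformly over the entire allowed range of $u$. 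That uniform splitting is the ingredient your argument is missing; without it (or a monotonicity/convexity claim with proof), the two-regime check does not establish the lemma.
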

\begin{proof} We first note that the bounds on $u$ imply that $\beta\leq 1/12$ and $t,u\geq 10^4$. To prove part (a), it suffices to compute the eigenvalues of $M'$ explicitly to be
$$\left\{2\sqrt{u^2+1}-1,2\sqrt{u^2+1}-1,3,-1\right\}.$$
Since $u\geq 2$, the largest of these is $2\sqrt{u^2+1}-1$.

\vspace{2mm}

To prove part (b), we construct such a graph. Between the parts to be connected by $t$-regular bipartite graphs, we place complete bipartite graphs, forming a graph $G_0^{(1)}$. Then we place $3$-regular bipartite graphs between the other two pairs of parts arbitrarily (this is possible since $t\geq 3$); let the union of these two $3$-regular bipartite graphs be $G_0^{(2)}$. Since $G_0^{(2)}$ is $3$-regular, its adjacency matrix has spectral norm at most $3$. Weyl's inequality thus tells us
$$\lambda_2(A_{G_0})\leq \lambda_2\left(A_{G_0^{(1)}}\right)+3.$$
The spectrum of $G_0^{(1)}$ consists of one copy each of $\pm 2t$ and $4t-2$ copies of $0$. This shows $\lambda_2(A_{G_0})\leq 3\leq\lambda_1(M')$, as desired.

\vspace{2mm}

Finally, we show (c). It is clear that $M'$ is a sign matrix of $M$; the rest requires some computation. We have the matrices
$$D:=\frac{M-M'}2=\begin{pmatrix}&t\beta&t\beta&1\\t\beta&&3&t\beta\\t\beta&3&&t\beta\\1&t\beta&t\beta&\end{pmatrix},\ D-\beta M=\begin{pmatrix}&&&1-3\beta\\&&3-3\beta&\\&3-3\beta&&\\1-3\beta&&&\end{pmatrix};$$
this gives $\lambda_1(D)\leq 2t\beta+3$ and $\gamma=\lambda_1(|D-\beta M|)\leq 3$. Also, $\sigma=8\sqrt t+4\sqrt 3<8\sqrt t+7$. Since the top eigenvalue of positive matrices may only increase if entries are increased, it suffices to show, using that $2u-1\leq \lambda_1(M')\leq 2u$,
$$L:=\lambda_1\begin{pmatrix}(1-2\beta)(2u)+56\sqrt t+55&4t\beta+6\\4t\beta+6&4t\beta+8\sqrt t+13    
\end{pmatrix}\leq 2u-1.$$
By sub-additivity of top eigenvalue, we have
$$L\leq \lambda_1\begin{pmatrix}56\sqrt t+55&6\\6&8\sqrt t+13\end{pmatrix}+t\cdot \lambda_1\begin{pmatrix}2(1-2\beta)^2&4\beta\\4\beta&4\beta\end{pmatrix}\leq 56\sqrt t+61+t(2-6\beta),$$
where we have used the computation that
$$\lambda_1\begin{pmatrix}2(1-2\beta)^2&4\beta\\4\beta&4\beta\end{pmatrix}\leq 2-6\beta$$
for $\beta\leq 1/12$. We thus need
$$56\sqrt t+62\leq 2\beta t=t-u,$$
which holds by assumption.
\end{proof}

We conclude with the proof of \cref{thm:large-mult-nice}(b). In fact, we following prove the (ever so slightly) stronger statement; we will need its precision to show \cref{thm:main}.

\begin{theorem}\label{thm:large-mult-nonint} Let $u\geq 10^5$ be an integer. There exists some integer $d\leq 5u/2$ for which, for an infinite sequence $n_0<n_1<\cdots$ of positive integers, each satisfying $n_{i+1}\leq 4n_i^9$, there exists a $d$-regular graph on $n_i$ vertices with second eigenvalue exactly $2\sqrt{u^2+1}-1$ of multiplicity $i=\Omega(\log\log n_i)$.
\end{theorem}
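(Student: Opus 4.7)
The plan is to invoke \cref{lem:matrix-choice} and then \cref{cor:general-loglog} in sequence. The key observation making this work is that the matrix $M$ in \cref{lem:matrix-choice} has every row summing to $2t+3$, so the all-ones vector is its Perron eigenvector; this triggers the ``furthermore'' clause of \cref{cor:general-loglog}, yielding \emph{regular} graphs of degree $d=2t+3$ rather than merely graph lifts.

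First I would select an integer $t$ of the same parity as $u$ satisfying the input condition $(5/6)t \leq u \leq t - 56\sqrt{t} - 62$ of \cref{lem:matrix-choice}. It suffices to find such a $t$ in the interval $[u + 56\sqrt{6u/5} + 62,\ 6u/5]$, since any $t$ there automatically satisfies $\sqrt{t} \leq \sqrt{6u/5}$. For $u \geq 10^5$, a direct estimate shows this interval has length at least $u/5 - 56\sqrt{6u/5} - 62 > 2$, so it contains integers of each parity; pick one matching that of $u$. With this $t$, set $\beta = (t-u)/(2t)$; by \cref{lem:matrix-choice}, $(M,M',\beta) \in \mathcal{M}$, $\lambda_1(M') = 2\sqrt{u^2+1}-1$, and there exists a connected graph lift $G_0$ of $M$ on $4t$ vertices with $\lambda_2(A_{G_0}) \leq \lambda_1(M')$.

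Feeding $G_0$ into \cref{cor:general-loglog} (with $r=4t$) then yields an infinite sequence $4t = n_0 < n_1 < \cdots$ satisfying $n_{i+1} \leq 4 n_i^9$ and a sequence of regular graphs on $n_i$ vertices with top eigenvalue $\lambda_1(M) = 2t+3$ and second eigenvalue exactly $2\sqrt{u^2+1}-1$ with multiplicity at least $i = \Omega(\log\log n_i)$. Setting $d = 2t+3$, the bound $t \leq 6u/5$ yields $d \leq 12u/5 + 3 \leq 5u/2$ for any $u \geq 30$, and in particular for $u \geq 10^5$.

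There is no serious obstacle; all the substantive work has been done in the preceding sections, and the present proof reduces to the numerical verification that a suitable integer $t$ exists in the prescribed window and that the resulting degree $d=2t+3$ meets the bound $d \leq 5u/2$. The reason the result is stated separately (rather than being absorbed into \cref{thm:large-mult-nice}(b)) is precisely to record this explicit bound on $d$ in terms of $u$, which will be needed in \cref{sec:equi} when translating the constructed graphs into lower bounds for equiangular lines.
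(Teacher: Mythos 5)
Your proposal is correct and matches the paper's proof: choose $t$ of the same parity as $u$ satisfying $(5/6)t \leq u \leq t - 56\sqrt{t}-62$, invoke \cref{lem:matrix-choice} to get $(M,M',\beta)\in\mathcal M$ along with a suitable connected seed graph, then apply \cref{cor:general-loglog}, using that the all-ones vector is an eigenvector of $M$ with eigenvalue $2t+3 \leq 5u/2$ to conclude regularity. Your explicit verification that the interval $[u+56\sqrt{6u/5}+62,\ 6u/5]$ is long enough to contain an integer of the right parity is a nice unpacking of the paper's terse assertion that such a $t$ exists for $u\geq 10^5$.
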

\begin{proof} Let $t$ be a positive integer of the same parity as $u$ satisfying $5t/6\leq u\leq t-56\sqrt t-62$, which exists since $u\geq 10^5$. Define
$$M=\begin{pmatrix}&t&t&3\\t&&3&t\\t&3&&t\\3&t&t&\end{pmatrix},\ M'=\begin{pmatrix}&u&u&1\\u&&-3&u\\u&-3&&u\\1&u&u&\end{pmatrix},$$
and $\beta=(t-u)/(2t)$. By \cref{lem:matrix-choice}, $\lambda_1(M')=2\sqrt{u^2+1}-1$, there exists a connected graph lift $G_0$ of $M$ on $4t$ vertices with $\lambda_2(A_{G_0})\leq \lambda_1(M')$, and $(M,M',\beta)\in \mathcal M$. The fact that the all-ones vector is an eigenvector of $M$ with eigenvalue $2t+3\leq 12u/5+3\leq 5u/2$ is enough to finish the proof by appealing to \cref{cor:general-loglog}.
\end{proof}

\section{Application: Equiangular lines}\label{sec:equi}

We apply \cref{thm:large-mult-nonint} to the problem of equiangular lines with fixed angle. Recall that, if the spectral radius order $k(\lambda)$ of $\lambda=(1-\alpha)/(2\alpha)$ is finite, then $N_\alpha(n)\geq (1+\varepsilon)n$ for some $\varepsilon>0$ and large $n$. So, to prove \cref{thm:main}, we need to find $\lambda$ which is not the top eigenvalue of any graph, but which may still be the second eigenvalue. To this end, we begin by giving a necessary condition for $k(\lambda)<\infty$. This condition is stated in \cite{JiangPolyanskii}.

\begin{lemma}\label{lem:top-eig-char} Suppose $\lambda$ is such that $k(\lambda)<\infty$. Then $\lambda$ is an algebraic integer all of whose Galois conjugates are real and at most $\lambda$ in magnitude.
\end{lemma}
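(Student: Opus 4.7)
The plan is to unpack the hypothesis $k(\lambda)<\infty$: by \cref{def:spec-radius-order}, there exists a graph $G$ whose adjacency matrix $A_G$ satisfies $\lambda_1(A_G)=\lambda$. Each of the three required conclusions will then be read off directly from a standard structural property of $A_G$, namely that it is an integer, real-symmetric, nonnegative matrix.

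First, since $A_G$ has integer entries, its characteristic polynomial $\chi_{A_G}(x)=\det(xI-A_G)$ is monic with integer coefficients, and $\lambda$ is a root of it; hence $\lambda$ is an algebraic integer. Next, the minimal polynomial $m_\lambda(x)$ of $\lambda$ over $\mathbb{Q}$ divides $\chi_{A_G}(x)$ in $\mathbb{Q}[x]$, so every Galois conjugate $\mu$ of $\lambda$ arises as a root of $\chi_{A_G}$ and therefore as an eigenvalue of $A_G$. Symmetry of $A_G$ then forces every such $\mu$ to be real. Finally, because $A_G$ has nonnegative entries, the Perron--Frobenius theorem identifies the spectral radius of $A_G$ with its largest eigenvalue $\lambda$, so that every eigenvalue of $A_G$ — in particular, every Galois conjugate of $\lambda$ — has magnitude at most $\lambda$.

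There is no substantive obstacle here; the lemma is essentially a packaging of three standard facts about adjacency matrices (integrality of the characteristic polynomial, reality of the spectrum of a symmetric matrix, Perron--Frobenius). The only step worth flagging is the claim that every conjugate of $\lambda$ is itself an eigenvalue of $A_G$, which relies on the minimal polynomial of $\lambda$ dividing the integer-coefficient characteristic polynomial of $A_G$.
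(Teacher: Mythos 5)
Your proof is correct and follows the same route as the paper: extract a graph $G$ with $\lambda_1(A_G)=\lambda$, use that the characteristic polynomial is monic with integer coefficients to get algebraicity and that every conjugate is an eigenvalue, use symmetry for reality, and use Perron--Frobenius for the magnitude bound.
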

\begin{proof} Let $G$ be any graph, and consider the characteristic polynomial $p_G(t)=\det(tI-A_G)$. This polynomial is monic of degree $|V(G)|$ and has integer coefficients; this implies (1) that every root of $p_G$ is an algebraic integer and (2) that if $\lambda$ is a root of $p_G$, then every Galois conjugate of $\lambda$ is as well (the minimal polynomial of $\lambda$ must divide $p_G$). Since $A_G$ is symmetric, every eigenvalue of $A_G$ is real. The remainder of the lemma follows from the fact that, since every entry of $A_G$ is nonnegative, every eigenvalue of $A_G$ is at most $\lambda_1(A_G)$ in magnitude, since $\lambda_1(A_G)$ is a Perron eigenvalue.
\end{proof}

\begin{remark}\label{rmk:top-eig-char} This condition is not sufficient. For any interval $I\subset\RR$ of length strictly exceeding $4$ (resp. strictly less than $4$), there exist infinitely many (resp. finitely many) totally real algebraic integers all of whose Galois conjugates lie within $I$ \cite{Robinson}. In particular, there are infinitely many totally real algebraic integers all of whose conjugates lie in $[-1.995,2.006]$, and, for all but finitely many of them, the largest conjugate strictly exceeds $2$. On the other hand, due to a result of \cite{CDG} (see \cite[Theorem 2.3]{CvetkovicRowlinson} and surrounding remarks) no undirected graph has largest eigenvalue in the interval $(2,2.007]$.
\end{remark}

\noindent Our choice of $\lambda$ is described in the following corollary.

\begin{corollary}\label{cor:non-top-eig} If $t$ is a positive integer which is not a square, then $k(2\sqrt t-1)=\infty$.
\end{corollary}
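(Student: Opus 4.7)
The plan is to apply Lemma~\ref{lem:top-eig-char} directly, by exhibiting a Galois conjugate of $\lambda = 2\sqrt{t}-1$ whose magnitude exceeds $\lambda$. First, I would observe that since $t$ is a positive non-square integer, $\sqrt{t}$ is irrational, so $\lambda$ has degree $2$ over $\mathbb{Q}$. Its minimal polynomial is obtained from $(\lambda+1)^2 = 4t$, giving $x^2 + 2x + (1-4t)$, which is monic with integer coefficients; in particular $\lambda$ is an algebraic integer, and its unique nontrivial Galois conjugate is $\lambda' = -2\sqrt{t}-1$.

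Next I would compare magnitudes: since $t \geq 2$, both $\lambda$ and $|\lambda'|$ are positive, and
\[
|\lambda'| = 2\sqrt{t}+1 > 2\sqrt{t}-1 = \lambda.
\]
Thus $\lambda$ fails the necessary condition of Lemma~\ref{lem:top-eig-char} (having all Galois conjugates at most $\lambda$ in magnitude), so $k(\lambda) = \infty$.

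There is essentially no obstacle here; the content of the corollary is a one-line arithmetic comparison once the lemma has done the work. The only subtlety worth a brief remark is that $t$ being a non-square ensures $\lambda$ is genuinely of degree $2$, so that $\lambda' \neq \lambda$ is a \emph{distinct} conjugate whose violation of the magnitude bound is meaningful.
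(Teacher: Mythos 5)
Your proof is correct and matches the paper's argument exactly: both identify $-2\sqrt{t}-1$ as the nontrivial Galois conjugate of $2\sqrt{t}-1$ and invoke Lemma~\ref{lem:top-eig-char}. The paper states this in a single sentence; your added computation of the minimal polynomial and the remark about degree $2$ are sensible elaborations but not a different route.
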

\begin{proof} For non-square $t$, $-2\sqrt t-1$ is a Galois conjugate of $2\sqrt t-1$ and exceeds $2\sqrt t-1$ in magnitude.
\end{proof}

Now, we give a result which allows us to derive sets of equiangular lines in every dimension from the graphs we have constructed.

\begin{lemma}\label{lem:regular-to-lines} Suppose there exists an $n$-vertex $d$-regular graph $G$ with second eigenvalue $\lambda>0$ of multiplicity $k$, and let $\alpha=1/(2\lambda+1)$. 
\begin{enumerate}[(a)]
    \item If $n\geq 2d$, then $N_\alpha(n-k)\geq n$. 
    \item If moreover $\lambda\geq 2\sqrt{d-1}$ and $\lambda\geq 2d/3$, then $N_\alpha(m)\geq m+k$ for every integer $m\geq n-k$.
\end{enumerate}
\end{lemma}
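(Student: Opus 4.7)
The plan is to realize the equiangular lines via the standard Seidel-matrix construction: for any graph $H$ on $N$ vertices, the matrix $I + \alpha S_H$, where $S_H := J - I - 2 A_H$, is the Gram matrix of $N$ unit vectors with pairwise inner products $\pm\alpha$ whenever $\lambda_{\min}(S_H) \geq -1/\alpha$, and these vectors span a subspace of dimension $N - \dim\ker(I + \alpha S_H)$. With $\alpha = 1/(2\lambda+1)$, the relevant quantity is thus the multiplicity of $-(2\lambda+1)$ as the smallest eigenvalue of $S_H$.

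For part (a), I would apply this to $H = G$. Since $G$ is $d$-regular, $\mathbf{1}$ is an eigenvector of $S_G$ with eigenvalue $n - 1 - 2d$, and on $\mathbf{1}^\perp$ the eigenvalues of $S_G$ are the numbers $-1 - 2\mu$ as $\mu$ ranges over the eigenvalues of $A_G$ other than $d$. The minimum is thus $-1 - 2\lambda$ with multiplicity $k$, and the Perron contribution $n-1-2d$ strictly exceeds it because $n \geq 2d$ and $\lambda > 0$. Hence $I + \alpha S_G$ is positive semidefinite with rank $n - k$, giving $n$ equiangular lines in $\RR^{n-k}$.

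For part (b), given any $m \geq n-k$, the plan is to build $G' := G_t \sqcup \overline{K_j}$ on $N := m+k$ vertices, where $G_t$ is obtained from $G$ by $t \geq 0$ iterated Ramanujan $2$-lifts (so $d$-regular on $n_t := 2^t n$ vertices with $\lambda_2(A_{G_t}) = \lambda$ of multiplicity at least $k$, by $\lambda \geq 2\sqrt{d-1}$ and \cref{thm:ramanujan}), and $j := N - n_t$ isolated vertices are attached. Decomposing $\RR^{n_t + j}$ into the three subspaces $\{(v,0) : v \perp \mathbf 1_{n_t}\}$, $\{(0,w) : w \perp \mathbf 1_j\}$, and the two-dimensional span of $(\mathbf 1_{n_t}, 0)$ and $(0, \mathbf 1_j)$, the Seidel matrix $S' = \begin{pmatrix} S_{G_t} & J_{n_t, j} \\ J_{j, n_t} & J_j - I_j \end{pmatrix}$ acts on the first piece as $S_{G_t}$ (preserving the $k$-dimensional $-(2\lambda+1)$-eigenspace coming from the $\lambda$-eigenspace of $A_{G_t}$), on the second by $-1$ (harmless because $-1 > -(2\lambda+1)$), and on the third by the explicit $2\times 2$ block $\begin{pmatrix} n_t - 1 - 2d & \sqrt{n_t j} \\ \sqrt{n_t j} & j - 1 \end{pmatrix}$. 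A direct computation (trace$^2 - 4\det = (n_t - 2d - j)^2 + 4n_t j$) shows the smaller eigenvalue of this block is at least $-(2\lambda+1)$ exactly when $j \leq j_{\max}(n_t) := \lambda(n_t - 2d + 2\lambda)/(d - \lambda)$, and using $\lambda \geq 2d/3$ and $n_t \geq 2d$ one verifies $j_{\max}(n_t) \geq 4 n_t/3$; consequently the intervals $[n_t - k,\, n_t - k + j_{\max}(n_t)]$ for consecutive $t$ overlap at $2 n_t = n_{t+1}$ and together cover $[n-k, \infty)$, producing a valid choice of $(t, j)$ for every $m \geq n-k$.

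The main obstacle is the eigenvalue bookkeeping for the $2\times 2$ block and verifying that the hypotheses $\lambda \geq 2\sqrt{d-1}$ and $\lambda \geq 2d/3$ are exactly the right inputs: the former so that iterated 2-lifts add no competing eigenvalue in $(\lambda, d)$ and hence the minimum Seidel eigenvalue $-(2\lambda+1)$ persists with multiplicity at least $k$, and the latter so that $j_{\max}(n_t) \geq n_t$ is large enough for consecutive intervals to tile $[n-k, \infty)$.
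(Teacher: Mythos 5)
Your proposal is correct and takes essentially the same route as the paper: the Gram matrix $(1-\alpha)I+\alpha J-2\alpha A_G$ in part (a) is identical, and your ``adding isolated vertices'' in part (b) is precisely the paper's appending of rows and columns filled with $\alpha$, combined with the same Ramanujan-lift doubling to reach every target dimension. The only difference is cosmetic---you decompose the Seidel matrix into three invariant subspaces and extract the exact eigenvalue threshold $j_{\max}(n_t)=\lambda(n_t-2d+2\lambda)/(d-\lambda)$ from the $2\times2$ block, whereas the paper bounds the corresponding quadratic form from below and checks the resulting inequality at the endpoint $\ell=2n$; both reduce to the same condition $(n_t-2(d-\lambda))(j+2\lambda)\ge n_t j$, verified using $\lambda\ge 2d/3$ and $n\ge 2d$.
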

\begin{proof} Let $A_G$ be the adjacency matrix of $G$; note that, since $G$ is $d$-regular, the vector $v_1$ with each component $1/\sqrt n$ is the top eigenvector of $A_G$ with eigenvalue $d$. Now, let $J$ be the $n\times n$ all-ones matrix, and consider the $n\times n$ symmetric matrix
$$M=(1-\alpha)I+\alpha J-2\alpha A_G.$$
Since $J=nv_1v_1^\intercal$ and $A_G$ are simultaneously diagonalizable, the spectrum of $M$ consists of one copy of $1-\alpha+\alpha n-2\alpha d>0$ corresponding to the all-ones vector and a copy of $(1-\alpha)-2\alpha\mu$ for every eigenvalue $\mu<d$ of $A_G$, with multiplicity equal to that of $\mu$. Since $\lambda_2(A_G)=\lambda=(1-\alpha)/(2\alpha)$, this means that $M$ is positive semidefinite and has eigenvalue $0$ with multiplicity $k$. So, $M$ is a Gram matrix of $n$ vectors $w_1,\dots,w_n\in \RR^{n-k}$. By its definition, every diagonal entry of $M$ is $1$ and every off-diagonal entry is $\pm\alpha$. So, each $w_i$ is a unit vector, and the lines spanned by distinct $w_i$ each meet at an angle of $\arccos\alpha$. This finishes the proof of part (a).

\vspace{2mm}

We now show part (b). By the above argument, it suffices to show, for each integer $\ell\geq n$, the existence of a positive semidefinite $\ell\times \ell$ matrix whose diagonal consists only of ones, whose off-diagonal entries are each $\pm\alpha$, and whose rank is at most $\ell-k$. We will show that, if an $n$-vertex graph $G$ exists as in the lemma statement, such a matrix exists for each $\ell\in [n,2n)$. From here, it suffices to note that, if such an $n$-vertex graph $G=G_0$ exists, there is a $2^in$-vertex graph $G_i$ also satisfying the necessary properties for all $i\geq 0$: by \cref{thm:ramanujan} and the fact that $\lambda\geq 2\sqrt{d-1}$, we can pick $G_{i+1}$ to be a suitable $2$-lift of $G_i$.

\vspace{2mm}

Now, consider the $n\times n$ matrix $M=(1-\alpha)I+\alpha J-2\alpha A_G$ described above; it is positive semidefinite, and the all-ones vector is an eigenvector of $M$ with eigenvalue $\alpha n-(2\alpha d-1+\alpha)$. For each integer $\ell\in (n,2n)$, define the $\ell\times \ell$ matrix $M_\ell$ by appending $\ell-n$ rows and columns to $M$, filling the diagonal with ones and all off-diagonal entries with $\alpha$. If $Mv=0$, then $v\oplus (0,\dots,0)$ is an eigenvector of $M_\ell$ with eigenvalue $0$, so $M_\ell$ has rank at most $\ell-k$. Moreover, the diagonal entries of $M_\ell$ are all $1$ and the off-diagonal entries are $\pm\alpha$. So, it suffices to prove that $M_\ell$ is positive semidefinite. For each $r$, let $1_r$ be the all-ones vector of length $r$, and let $t=\ell-n$; we need to show
\begin{align*}
\begin{pmatrix}u^\intercal&v^\intercal\end{pmatrix}&\begin{pmatrix}M&\alpha 1_n1_t^\intercal \\\alpha 1_t1_n^\intercal&\alpha 1_t1_t^\intercal+(1-\alpha)I_t\end{pmatrix}\begin{pmatrix}u\\v\end{pmatrix}\\
&=u^\intercal Mu+2\alpha(u^\intercal 1_n)(v^\intercal 1_t)+\alpha(v^\intercal 1_t)^2+(1-\alpha)v^\intercal v\geq 0
\end{align*}
for each $u\in\RR^n$ and $v\in\RR^t$. Indeed, since $1_n$ is an eigenvector of $M$ with eigenvalue $1-\alpha+\alpha n-2\alpha d$,
\begin{align*}
(u\oplus v)^\intercal M_\ell(u\oplus v)&=u^\intercal Mu+2\alpha(u^\intercal 1_n)(v^\intercal 1_t)+\alpha(v^\intercal 1_t)^2+(1-\alpha)v^\intercal v\\
&\geq \left(\alpha-\frac{2\alpha d-(1-\alpha)}n\right)(u^\intercal 1_n)^2+2\alpha xy+\alpha y^2+\left(\frac{1-\alpha}t\right)(v^\intercal 1_t)^2\\
&=\left(\alpha-\frac{2\alpha d-(1-\alpha)}n\right)x^2+2\alpha xy+\left(\alpha+\frac{1-\alpha}t\right)y^2,
\end{align*}
where we have set $x=u^\intercal 1_n$ and $y=v^\intercal 1_t$, and used that $M$ is positive semidefinite. It thus suffices that
$$\left(\alpha-\frac{2\alpha d-(1-\alpha)}n\right)\left(\alpha+\frac{1-\alpha}t\right)\geq \alpha^2.$$
The left side is decreasing in $t$, so we need only verify the inequality when $t=n$. Upon substituting $(1-\alpha)/\alpha=2\lambda$, the sufficient condition becomes
$$\left(1-\frac{2(d-\lambda)}n\right)\left(1+\frac{2\lambda}n\right)\geq 1.$$
This simplifies to $\frac\lambda{d-\lambda}-1\geq \frac{2\lambda}n$; the facts that $\lambda\geq 2d/3$ and $n\geq 2d\geq 2\lambda$ suffice to verify this inequality.
\end{proof}

\begin{remark}\label{rmk:non-augment} It is not generally true that $N_\alpha(n+1)\geq N_\alpha(n)+1$. In fact, $N_{1/3}(7)=\cdots=N_{1/3}(14)=28$ \cite{GSY}.
\end{remark}

Finally, we prove \cref{thm:main}.

\begin{proof}[Proof of \cref{thm:main}] Let $u\geq 10^5$ be an integer, and let
$$\alpha=\frac1{4\sqrt{u^2+1}-1}.$$
We will show that $n+\Omega(\log\log n)\leq N_\alpha(n)\leq n+O(n/\log\log n)$. 

\vspace{2mm}

Let $\lambda=(1-\alpha)/(2\alpha)=2\sqrt{u^2+1}-1$. The upper bound on $N_\alpha(n)$ follows from \cref{thm:JTYZZ} and \cref{cor:non-top-eig}: since $u^2+1$ is not a square, \cref{cor:non-top-eig} gives that $k(\lambda)=\infty$, so \cref{thm:JTYZZ}(b) implies $N_\alpha(n)=n+O(n/\log\log n)$. For the lower bound, we use the graphs from \cref{thm:large-mult-nonint}. For some $d\leq 5u/2\leq 3\lambda/2$ and some sequence $n_0<n_1<\cdots$ satisfying $n_i\leq 4n_{i-1}^9$, there exist $d$-regular graphs on $n_i$ vertices with second eigenvalue $\lambda$ of multiplicity $\Omega(\log\log n_i)$. Defining $f:\mathbb N\to\RR$ by $f(n)=0$ for $n<n_0$ and $f(n)=i$ whenever $n_i\leq n<n_{i+1}$, we have $f(n)=\Omega(\log\log n)$. Since $\lambda\geq 2d/3$, \cref{lem:regular-to-lines} gives that $N_\alpha(n)\geq n+i$ as long as $n\geq n_i-i$; this implies
$$N_\alpha(n)\geq n+f(n)\geq n+\Omega(\log\log n),$$
as desired.
\end{proof}

\section*{Acknowledgment}

The author would like to thank Yufei Zhao for his mentorship, for suggesting the problem, and for providing helpful comments and suggestions. 

\bibliographystyle{halpha}
\bibliography{bib.bib}
\end{document}